\documentclass[11pt]{article}
\topmargin -1cm
\oddsidemargin 0cm
\evensidemargin 0cm
\textwidth 16.5cm
\textheight 23.5cm
\usepackage[mathscr]{euscript}

\usepackage{amsmath,amssymb}
\usepackage{graphicx}
\usepackage{bbm}
\usepackage{xcolor}
\usepackage{stmaryrd}
\usepackage{upgreek}
\usepackage{esint}

 \newtheorem{remark}{Remark}
\newtheorem{theorem}{Theorem} \newtheorem{lemma}{Lemma}

\newtheorem{proposition}{Proposition}
 \newtheorem{corollary}{Corollary}

\def \R{{\mathbb{R}}}

\def \Z{{\mathbb{Z}}}

\def \T{{\mathbb{T}}}

\def \del {\partial}

\def \ue {{u^\eta}}
\def \Pe {P^\eta}
\def \pe {{p^\eta}}
\def \Pbe {{P_b^\eta}}
\def \Pei {{P_i^\eta}}


\title{$C^{0,\alpha}$ boundary regularity for the pressure in weak solutions of the $2d$ Euler equations}  \author{Claude W.  Bardos  \footnotemark[1] \and Edriss S. Titi \footnotemark[2]}

\begin{document}

\maketitle

{\bf Keywords:} Euler equations, pressure regularity, boundary effects.

{\bf Mathematics Subject Classification:} 35Q35, 35Q31,7603

\renewcommand{\thefootnote}{\fnsymbol{footnote}}

\footnotetext[1]{
Laboratoire J.-L. Lions, BP187, 75252 Paris Cedex 05, France. {
claude.bardos@gmail.com }}

\footnotetext[2]{ Department of Mathematics,
                 Texas A\&M University,
                 College Station, TX 77843-3368, USA; Department of Applied Mathematics and Theoretical Physics, University of Cambridge, Cambridge CB3 0WA UK; also Department of Computer Science and Applied Mathematics, Weizmann Institute of Science, Rehovot 76100, Israel.  {titi@math.tamu.edu} \quad {Edriss.Titi@damtp.cam.ac.uk}}

\maketitle

\begin{abstract}
The purpose of this note is to give a complete proof of a $C^{0,\alpha}$
regularity result for the pressure for weak solutions of the two-dimensional
``incompressible Euler equations'' when the fluid velocity enjoys the same type of regularity in a compact simply connected domain with $C^2$  boundary. To accomplish our result  we realize that it is compulsory to introduce a new weak formulation for the boundary condition of the pressure which is consistent with, and equivalent to, that of classical solutions.
\end{abstract}

\section{Introduction}
This contribution is devoted to the analysis of the regularity of the pressure, $p\,,$ associated with the weak  solutions:
\begin{equation}
(x,t) \mapsto u(x,t) \in C ([0,T]; C^{0,\alpha} (\Omega))\,, \quad \hbox{with} \quad \alpha \in (0,1)\,,
\end{equation}
for the Euler equations of incompressible invsicid/ideal fluid
\begin{equation}
\begin{aligned}
&\del_t u +\sum_{j=1}^d   \del_j (u_j u ) +\nabla p =0\,, \\
& \nabla\cdot  u=0\,, \quad \hbox{in} \,\, \Omega\,, \quad \hbox{and} \quad u\cdot \vec  n =0  \quad \hbox{on } \del \Omega\,, \label{euler1}
\end{aligned}
\end{equation}
where $\Omega \subset \R^d$ is a simply connected bounded domain with a smooth (say $C^2$) boundary, while
$\vec n( x)$   denotes   the extension in a neighborhood of the boundary $\del\Omega$ of
 the interior normal to the boundary. The tensorial  notation $v\otimes w$  of two vectors $v,w \in \R^d$ will be used whenever convenient for the matrix with entries $(v_i w_j)_{i,j=1}^d$ (and its various avatars), in particular in the next formula (\ref{wi}). Moreover, for $d\times d$ square matrices $A,B$ we denote by $\displaystyle A:B= {
 \rm{trace}} \big(A \cdot B^T \big)= \sum_{i,j=1}^d A_{i,j} B_{j,i}$\,.

In a compact domain with no boundary,  (basically the torus $\Omega=(\R/(L\Z))^d$) using  the divergence free condition, one deduces from (\ref{euler1}) the equation
\begin{equation}
-\Delta p= (\nabla_x\otimes\nabla_x) : (u\otimes u)\,, \label{wi}
\end{equation}
which uniquely determines the pressure (up to a constant) and also determines   its regularity in term of the regularity of the tensor $(u\otimes u)$. For instance, for $u\in C^{k,\alpha}(\Omega)\,,$ standard H\"older elliptic regularity (cf. \cite{Krylov} chapter 3 or \cite{LU} chapter 3) states that  for any integer $k\ge 0\,,$ $p$ belongs also to the space $C^{k,\alpha}(\Omega)\,.$ In the presence a boundary $\del\Omega\,,$ taking the scalar product with the interior normal $\vec n$ of the first equation of (\ref{euler1}) one obtains, now for $k\ge 2$, the relation:
\begin{equation}
-\del_{\vec n} p = \sum_{i,j=1}^d \del_j(u_iu_j) \vec n_i = (u\otimes u): \nabla \vec n\,, \quad \hbox{on}\,\, \del\Omega \label{wb}\,.
\end{equation}
In (\ref{wb}) the second identity follows from the fact that $u$ is tangential to the boundary $\del\Omega$, (i.e., $ u \cdot \vec n=0\,$ on $\del\Omega$ which is a level surface of the scalar function $u\cdot\vec n$). On  $\del \Omega\,,$ $\nabla \vec n$ is called the Weingarten matrix. It is determined in term of the principal curvatures of $\del \Omega\,,$  and in two dimensions it is a scalar, i.e., the curvature $\gamma$ of $\del\Omega\,.$ Therefore, it is natural to use
\begin{equation}
-\del_{\vec n} p =  (u\otimes u): \nabla \vec n \label{wb1}\,, \quad \hbox{on}\,\, \del\Omega\,,
\end{equation}
as the boundary condition for the pressure in weak formulation, when the boundary $\del\Omega\ \in C^2$\,.

The  first observation is that   equations (\ref{wi}) and (\ref{wb}) may define $p$ only up to a constant as stated in the following:
\begin{proposition}\label{unicity}
 Let $(p,q) \in (\mathcal D'(\overline{\Omega}))^2$ be two extendable distributional  solutions of (\ref{wi}) and (\ref{wb}), i.e. distributions which are defined in an open neighborhood of $\overline{\Omega}$. Then $r=p-q$ which is a solution of
 $
 -\Delta r=0
 $ in $\Omega$ and $\del_{\vec n} r=0$ on $\del\Omega$  is a constant . Hence there is at most one solution of the system (\ref{wi}) and (\ref{wb}) with the extra condition
 \begin{equation*}
 \int_{\Omega} p(x)dx =0\,.
 \end{equation*}
 \end{proposition}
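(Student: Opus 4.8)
The plan is to subtract the two systems and reduce the statement to the classical uniqueness for the homogeneous Neumann problem for the Laplacian on a connected domain. Setting $r=p-q$, the right-hand sides of (\ref{wi}) and (\ref{wb}) depend only on $u$ and therefore cancel, so $r$ solves $-\Delta r=0$ in $\Omega$ together with the homogeneous boundary condition $\del_{\vec n} r=0$ on $\del\Omega$. The goal is then to show that such an $r$ must be constant, from which both assertions follow: the first is immediate, and for the second, if in addition $\int_\Omega p\,dx=\int_\Omega q\,dx=0$ then $\int_\Omega r\,dx=0$, which forces the constant to vanish and hence $p=q$.

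First I would record that, by the hypoellipticity of the Laplacian (Weyl's lemma), the distributional identity $\Delta r=0$ forces $r$ to be real-analytic, in particular $C^\infty$, in the interior of $\Omega$. Interior regularity is thus automatic; the only genuine issue is regularity up to the boundary, and this is precisely the role of the extendability hypothesis. Because $p$ and $q$ (hence $r$) are restrictions to $\overline\Omega$ of distributions defined on an open neighborhood of $\overline\Omega$, the trace of $r$ and its normal derivative $\del_{\vec n} r$ on $\del\Omega\in C^2$ are well defined as boundary distributions, and (\ref{wb}) is to be read as an equality of such boundary distributions, so that $\del_{\vec n} r=0$ genuinely makes sense.

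The heart of the argument is then Green's first identity applied to $r$. Writing $\vec\nu=-\vec n$ for the outward normal, one has formally
\begin{equation*}
\int_\Omega |\nabla r|^2\,dx=\int_{\del\Omega} r\,\del_{\vec\nu} r\,dS-\int_\Omega r\,\Delta r\,dx=0\,,
\end{equation*}
since $\Delta r=0$ in $\Omega$ and $\del_{\vec\nu}r=-\del_{\vec n}r=0$ on $\del\Omega$. Consequently $\nabla r\equiv 0$ and, $\Omega$ being connected, $r$ is constant. I expect the main obstacle to be the rigorous justification of this integration by parts at the distributional level, where the boundary terms live only in a weak sense. The natural remedy is to exhaust $\Omega$ from inside by a family of smooth subdomains $\Omega_\delta$ increasing to $\Omega$, apply the classical Green identity on each $\Omega_\delta$, where $r$ is smooth, and then pass to the limit $\delta\to 0$, using the $C^2$ regularity of $\del\Omega$ together with the extendability of $r$ to control the boundary integrals and to ensure convergence of the traces of $r$ and $\del_{\vec n} r$. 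Once this identity is secured the conclusion, and with it the uniqueness under the normalization $\int_\Omega p\,dx=0$, is immediate.
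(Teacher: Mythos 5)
The first thing to say is that the paper contains no proof of Proposition \ref{unicity} at all: it is stated as an observation, with uniqueness for the homogeneous Neumann problem taken as classical. So your proposal is not competing with an argument in the text; it is supplying one. Your reduction (the right-hand sides of (\ref{wi}) and (\ref{wb}) depend only on $u$, hence cancel in $r=p-q$), the appeal to hypoellipticity for interior smoothness, the energy identity, and the final deduction of uniqueness under the normalization $\int_\Omega p\,dx=0$ are exactly the classical route the authors evidently have in mind.

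There is, however, a genuine soft spot at the step you yourself flag as the main obstacle, and the remedy you propose does not close it. Two separate issues arise. First, extendability alone does not give traces: a general distribution defined near $\overline\Omega$ has no restriction to the hypersurface $\del\Omega$; what saves the day is that $r$ is \emph{harmonic} in $\Omega$, so extendability forces at most polynomial growth in $1/d(x,\del\Omega)$, and it is this tempered growth which yields distributional boundary values of $r$ and $\del_{\vec n} r$ --- a harmonic-function-specific fact that should be invoked explicitly. Second, and more seriously, in the exhaustion argument the boundary term $\int_{\del\Omega_\delta} r\,\del_{\vec\nu} r\,dS$ is a pairing of two quantities each of which you control only weakly: $r|_{\del\Omega_\delta}$ and $\del_{\vec\nu} r|_{\del\Omega_\delta}$ converge, at best, in some negative Sobolev space, to the trace of $r$ and to $0$ respectively. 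Weak convergence of both factors does not give convergence of their pairing; one factor would have to converge strongly in the dual of a space where the other is bounded, and nothing in the hypotheses provides this. Two standard ways to repair the argument: (i) bootstrap regularity first --- since $r$ is harmonic with vanishing distributional Neumann data, elliptic theory for the Neumann problem (transposition in the sense of Lions--Magenes, or even reflection after flattening the $C^2$ boundary) gives $r\in C^1(\overline\Omega)$, after which your computation is literally valid; or (ii) avoid the quadratic boundary term by duality: for any $\phi\in C^\infty(\del\Omega)$ with $\int_{\del\Omega}\phi\,dS=0$, let $w$ solve the smooth Neumann problem $\Delta w=0$ in $\Omega$, $\del_{\vec n} w=\phi$, and apply Green's second identity on $\Omega_\delta$; every boundary pairing then has one smooth factor, the limit $\delta\to 0$ is legitimate, and one concludes that the boundary value of $r$ is orthogonal to all mean-zero $\phi$, hence constant, hence $r$ is constant.
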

 The second observation is that the existence and the regularity of a solution of   (\ref{wi}) and (\ref{wb})  follows also, for $k\ge 2\,,$ from the classical H\"older elliptic regularity for boundary value problems as described in chapter 6 of \cite{Krylov} and in the chapter 3 of \cite{LU}.
 This yields the existence and uniqueness  of  the pressure as stated in the following:
 \begin{theorem} \label{Grosbasic}
   Let $\del \Omega \in C^k$, and let $u\in C^{k,\alpha} ( \Omega)$ with $k\ge 2\,,$ be a divergence free vector field which is  tangential to the boundary. Then  there is one and  only one solution of (\ref{wi})  and  (\ref{wb}) :
   \begin{equation*}
   \begin{aligned}
 &\hbox{in} \quad \Omega \quad -\Delta p =\nabla\otimes \nabla : (u\otimes u)\,, 
 \\
  &\hbox{on }  \del \Omega   \quad -\del_{\vec n} p =(u\otimes u):\nabla  \vec n
 \quad \hbox{ and} \quad
 \int_{\Omega } p(x)dx=0\,.
\end{aligned}
\end{equation*}
  Moreover, this solution satisfies the estimate
  \begin{equation}
  \|p  \|_{C^{k,\alpha} ( \Omega)} \le C\|(u \otimes u) )\|_{C^{k,\alpha}(\Omega)}\,,  \label{goal}
 \end{equation}
 where $C$ is a positive constant that depends only on $\alpha$ and $\Omega$.
\end{theorem}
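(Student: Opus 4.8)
The plan is to recognize the system of Theorem~\ref{Grosbasic} as the Neumann (conormal derivative) problem for the Poisson equation $-\Delta p = f$ in $\Omega$, $-\del_{\vec n}p = g$ on $\del\Omega$, with $f = \nabla\otimes\nabla:(u\otimes u)$ and $g = (u\otimes u):\nabla\vec n$, and to invoke the classical Schauder theory for this boundary value problem. Since $u\in C^{k,\alpha}(\Omega)$ with $k\ge 2$, the tensor $u\otimes u$ lies in $C^{k,\alpha}(\Omega)$, so $f$ is a genuine $C^{k-2,\alpha}(\Omega)$ function and no distributional interpretation of the right-hand side is required. The only structural point that is not generic --- and the one I expect to be the crux --- is that the Neumann problem for $-\Delta$ carries the constants in its kernel (this is precisely the content of Proposition~\ref{unicity}), so by the Fredholm alternative a solution exists if and only if the data satisfy the corresponding compatibility (solvability) condition. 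Establishing this condition is therefore the first and essential step.

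To verify compatibility, I would integrate the right-hand side over $\Omega$ and integrate by parts once: writing $f = \sum_{i,j}\del_i\del_j(u_iu_j)$, the divergence theorem gives
\begin{equation*}
\int_\Omega f\,dx = -\int_{\del\Omega}\sum_{i,j} \vec n_i\,\del_j(u_iu_j)\,dS\,,
\end{equation*}
where $\vec n$ is the interior normal, so that $-\vec n$ is the outward normal. By exactly the computation that produced (\ref{wb}) --- expanding $\del_j(u_iu_j)$, discarding the divergence-free term $u_i\,\del_j u_j=0$, and using that the tangential derivative of $u\cdot\vec n$ vanishes on the level surface $\{u\cdot\vec n=0\}$ --- the boundary integrand equals $(u\otimes u):\nabla\vec n$. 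Hence $\int_\Omega f\,dx = -\int_{\del\Omega}g\,dS$, which is precisely the solvability condition for the Neumann problem $-\Delta p=f$, $-\del_{\vec n}p=g$. It is here, and only here, that both the incompressibility and the tangency of $u$ enter in an essential way.

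With compatibility secured, I would apply the $C^{k,\alpha}$ Schauder estimates for the Neumann problem (chapter~6 of \cite{Krylov}, chapter~3 of \cite{LU}): there exists a solution $p\in C^{k,\alpha}(\Omega)$, unique up to an additive constant, satisfying the a priori bound
\begin{equation*}
\|p\|_{C^{k,\alpha}(\Omega)} \le C\big(\|f\|_{C^{k-2,\alpha}(\Omega)} + \|g\|_{C^{k-1,\alpha}(\del\Omega)}\big)\,,
\end{equation*}
with $C=C(\alpha,\Omega)$. Since $f$ costs two derivatives of $u\otimes u$, and since $g=(u\otimes u):\nabla\vec n$ carries the geometric factor $\nabla\vec n$ whose norm depends only on $\Omega$ and is absorbed into $C$, both norms on the right are controlled by $\|u\otimes u\|_{C^{k,\alpha}(\Omega)}$; this yields the estimate (\ref{goal}).

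Finally, I would fix the additive constant by imposing $\int_\Omega p\,dx=0$. Uniqueness among zero-mean solutions is then immediate from Proposition~\ref{unicity}: the difference of two solutions is harmonic with vanishing normal derivative, hence constant, and the normalization forces that constant to be zero. To reiterate, the main obstacle is not the elliptic regularity, which is a direct citation, but the verification of the compatibility condition, for which the divergence form of $f$ together with the tangency identity behind (\ref{wb}) is indispensable.
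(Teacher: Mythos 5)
Your overall strategy coincides with the paper's: the paper offers no detailed proof of Theorem~\ref{Grosbasic}, only the citation to classical H\"older--Schauder theory for the Neumann problem (chapter 6 of \cite{Krylov}, chapter 3 of \cite{LU}) together with Proposition~\ref{unicity} for uniqueness. Your decision to make the Fredholm compatibility condition explicit is exactly the point the paper leaves implicit, and you are right that this is where incompressibility and tangency must enter.

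However, the step on which your compatibility check rests fails as written. You assert that, on $\del\Omega$, the boundary integrand $\sum_{i,j}\vec n_i\,\del_j(u_iu_j)$ equals $(u\otimes u):\nabla\vec n$, quoting (\ref{wb}); but carrying out the computation you describe (with $\vec n$ the interior normal extended by $\vec n=\nabla_x d(x,\del\Omega)$, as in the paper) gives the \emph{opposite} sign:
\begin{equation*}
\sum_{i,j}\vec n_i\,\del_j(u_iu_j)=\sum_{i,j}\vec n_i u_j\del_j u_i
=(u\cdot\nabla)(u\cdot\vec n)-\sum_{i,j}u_iu_j\,\del_j\vec n_i
=-\,(u\otimes u):\nabla\vec n \quad\hbox{on}\quad \del\Omega\,,
\end{equation*}
since the tangential derivative $(u\cdot\nabla)(u\cdot\vec n)$ vanishes there. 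Hence $\int_\Omega f\,dx=+\int_{\del\Omega}g\,dS$, while the solvability condition you correctly identified for $-\Delta p=f$, $-\del_{\vec n}p=g$ is $\int_\Omega f\,dx=-\int_{\del\Omega}g\,dS$; the two agree only if $\int_{\del\Omega}g\,dS=0$, which is false in general. A concrete test: rigid rotation $u=(-x_2,x_1)$ in the unit disk gives $f\equiv -2$ and, under the paper's conventions (where the disk has $\gamma=-1$), $g=\gamma(u\cdot\vec\tau)^2\equiv-1$, so $\int_\Omega f\,dx=-2\pi=\int_{\del\Omega}g\,dS\neq-\int_{\del\Omega}g\,dS$; and indeed the true pressure $p=|x|^2/2+c$ satisfies $\del_{\vec n}p=g$, not $-\del_{\vec n}p=g$. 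So an honest execution of your check does not close the argument; rather, it exposes a sign error in (\ref{wb}) itself (its second equality), which the statement of Theorem~\ref{Grosbasic} inherits and which the paper silently corrects later: the boundary condition actually used from \eqref{regu2}, \eqref{wneumann} and Corollary~\ref{super} onward is $\del_{\vec n}p=(u\otimes u):\nabla\vec n=\gamma(u\cdot\vec\tau)^2$. With that sign, your entire scheme --- compatibility via the divergence structure and tangency, Schauder estimate, normalization of the constant via Proposition~\ref{unicity} --- is correct and complete. A secondary caveat, common to your write-up and the paper: bounding the Neumann datum in $C^{k-1,\alpha}(\del\Omega)$ by $\|u\otimes u\|_{C^{k,\alpha}(\Omega)}$ uses $\nabla\vec n\in C^{k-1,\alpha}$, i.e.\ slightly more boundary regularity than the stated $\del\Omega\in C^k$.
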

As briefly described in the conclusion section below the $C^{0,\alpha}$ regularity plays an  important role in the mathematical understanding of  turbulence, in particular in the presence of boundary effects.
Hence the purpose of the present contribution is to extend Theorem \ref{Grosbasic} above to the $C^{0,\alpha}(\Omega)$ case,  hence providing a detailed proof of a proposition already used in a previous article (cf. Proposition 2 in \cite{BT2}).
The expected result, which will be proven as Theorem \ref{Super} and Corollary \ref{super} in section \ref{Supersuper}, concerns this extension of Theorem \ref{Grosbasic} to the case $k=0\,,$ under the assumption that $\del \Omega \in C^2$.
 However, the weak boundary condition \eqref{wb1}  involves the quantity
 $\del_{\vec n} p$,  which might not be well defined on $\del\Omega$ in this case. Therefore, we propose an even weaker formulation than \eqref{wb1} for the boundary condition  which  involves the quantity
 $$
 \del_{\vec n} \Big(p + (u\cdot \vec \nabla_x d(x,\del\Omega))^2\Big)\,, \quad \hbox{on} \quad \del\Omega\,,
 $$
instead. This is motivated by the fact that for $u$ is smooth enough (say in $C^{0,\alpha}(\Omega)$ with $\alpha > \frac{1}{2}$) with $u\cdot\vec n=0$  on the boundary, $\del\Omega$\,, one has:
\begin{equation}\label{adjustment-term}
 \del_{\vec n} \Big(u\cdot \vec \nabla_x d(x,\del\Omega)\Big)^2 =0\,, \quad \hbox{on} \quad \del\Omega\,.
 \end{equation}
 Consequently, instead of the weak boundary condition \eqref{wb1} for the pressure we consider in this work the following    version
\begin{equation}
-\del_{\vec n} \Big( p + \big(u\cdot \vec \nabla_x d(x,\del\Omega)\big)^2\Big)=  (u\otimes u): \nabla \vec n \label{wb2}\,, \quad \hbox{on}\,\, \del\Omega\,.
\end{equation}
This is obviously equivalent to the boundary conditions (\ref{wb}) or  (\ref{wb1}) in the case of classical solutions; in particular, it is equivalent to (\ref{wb1}) when  $u\in C^{0,\alpha}(\Omega)$ with $\alpha > \frac{1}{2}$\,. However, when $\alpha \in (0,\frac{1}{2}]$\,, which included the Onsager's critical exponent $\alpha =\frac{1}{3}$\,, (\ref{wb2}) is a weaker formulation than (\ref{wb1}) for the boundary condition of pressure in the framework of weak solution to the Euler equations in the presence of a boundary. This is because the left-hand side of (\ref{wb2}) involves the sum of two terms, which we will show that it makes sense at the boundary, while each term might not necessarily be regular enough to make sense at the boundary on its own. It is this  boundary condition  that we will be adopting in this  contribution.

  In spite the fact that we strongly believe that the same type of result will hold in the three-dimensional case, which is a subject of future work,  we consider below only the two-dimensional case for the following reasons.

\begin{enumerate}

\item[(i)] Although the result seems to be very natural, the proof turned out to be more elaborated than expected. Therefore, we choose to consider a situation where we can provide the full details, while keeping the presentation user friendly.

\item[(ii)] We use a {\it global localisation near the boundary}, which may not be absolutely compulsory in the present case,  but as stated in the conclusion, this idea may be extremely useful for companion  problems where the analyticity properties have to be preserved.

\end{enumerate}

This work is organized as follows:

\begin{enumerate}

 \item As mentioned above  we focus on the  two-dimensional case and provide a  global representation of the neighborhood  of the boundary. This is done by introducing what is  called global  geodesic coordinates and then state our main results,  Theorem \ref{Super} and Corollary \ref{super}.
\item We introduce in section 3 an incompressible regularized family of vector fields, $u^\eta\in C^\infty (\overline{\Omega})$, which is tangential to the boundary $\del \Omega\,,$ and which converges  in the  $C^0(\overline{\Omega})$  norm to the velocity field $u\in C^{0,\alpha}(\Omega)$ as $\eta \to 0$. We then establish the $C^{0,\alpha}(\Omega)$ uniform estimate, with respect to $\eta\,,$  for the corresponding pressure $p^\eta$ of the regularized tensor $(\ue\otimes \ue)\,.$

\item The final result is obtained by letting $\eta \rightarrow 0\,.$
\item In section 4 we conclude by arguing  on the pertinence,  not only of this result,  but also of the method for  the progress of mathematical theory of turbulence with boundary effects.
 \end{enumerate}

 \section{Global geodesic coordinates near the boundary $\del \Omega\,.$}
As we have mentioned above, for sake of clarity and also with further applications in mind we focus on the  two-dimensional case. We start  with a  parametric representation of $\del\Omega\,,$ a closed $C^2$ curve of length  $L\,:$
 $$
 \theta \in \T= \R \slash (\Z L)  \mapsto  x(\theta) =(x_1(\theta),x_2(\theta) ) \in \del\Omega\,,
 $$
  with $\vec \tau(\theta) $ and $\vec n(\theta) $ being, respectively, the unit tangent and interior normal vectors at the boundary:
 \begin{equation*}
  \begin{aligned}
  & \vec \tau (\theta)=\vec \tau(x(\theta))  =(  x'_1(\theta),   x'_2(\theta) )\,, \,\, \vec n (\theta) =\vec n (x(\theta))=  (- x'_2 (\theta),  x'_1(\theta) )\\
&\hbox{ with}\,\, | \vec n (\theta)|^2=| \vec \tau  (\theta)|^2=( x'_1(\theta) )^2 +( x'_2(\theta) )^2 =1\,.
\end{aligned}
   \end{equation*}
 Let $d(x,\del \Omega)$ denote the distance of any point $x\in \R^2 $ to $\del \Omega$ . Then there exists a $ \delta >0$ such that on the open set
  $$
  V_{  \delta}=\{x\in\R^2\quad \hbox{with} \quad  d(x,\del\Omega)< {\delta}\}\,,
  $$
  there  is a unique point $\hat x(\theta)  \in \del \Omega$ with
 $d(x,\del\Omega) =|x-\hat x(\theta) )|\,.$ Then the mapping
   $x\mapsto \hat x(\theta) $ belongs to $  C^2( {V_\delta}, \del \Omega) \,,$ and  for $x\in V_\delta\,,$  one has the formula
   \begin{equation*}
   \nabla_x d(x,\del\Omega) =\vec n (\hat x(\theta))\,,
   \end{equation*}
   while, in the absence of confusion, the notations $\vec n(x)$ and $\vec\tau (x) $  will be used for $\vec n(\hat x(\theta)) $  and $\vec \tau (\hat x(\theta)) $, respectively.
   Observe that 									
\begin{equation*}
 \vec\tau\,'(\theta) \wedge \vec n\,'(\theta)=  x'_1 (\theta) x_1''(\theta)  + x'_2(\theta)  x_2''(\theta) =\frac{d}{ d \theta}| x'(\theta)|^2=0\,,
\end{equation*}
which implies the relation
\begin{equation}\label{normal-tangent-derivatives}
\vec n\,'(\theta) = \gamma(\theta) \vec \tau (\theta) \quad\hbox{and} \quad \vec \tau\,'(\theta)= \gamma(\theta) \vec n(\theta)\,,
\end{equation}
with
\begin{equation*}
\gamma(\theta)= x_1''(\theta) x_2'(\theta) -x'_1(\theta) x_2''(\theta)\,,
\end{equation*}
being the curvature of the boundary $\del \Omega \,.$ Therefore  the mapping:
\begin{equation*}
(\theta,s)\mapsto  X(s,\theta) =  x(\theta)  + s\vec n(  x(\theta))\,,
\end{equation*}
defines a global $C^2$ diffeomorphism of $[-\delta ,\delta ] \times (\R/(L\Z))$ onto $\overline{V_\delta}\,.$ Moreover,   for any vector map $x\in \overline \Omega \mapsto v(x)\,,$ as soon as $x\in\overline V_\delta\cap \overline{\Omega}\,,$ using the above notations, one has:
\begin{equation*}
v(x)= \big(v(x)\cdot\vec \tau(x)\big) \vec \tau(x) +\big(v(x)\cdot\vec n(x)\big) \vec n  (x) \,.
\end{equation*}
Below, for the sake of clarity,   the symbol $X$ is used for any $x=X(s,\theta)$, for $(s,\theta)\in [-\delta ,\delta ] \times (\R/(L\Z))$\,, and the following formulas due to this representation are recalled:
\begin{equation*}
\begin{aligned}
&\del_s X(s,\theta) =\vec n(\theta)
\,,\quad \del_\theta X(s,\theta) = J(s,\theta) \vec \tau (\theta)\,,\\
&\hbox{with}\quad  J(s,\theta)= 1 + s\gamma(\theta) >0  \quad \hbox{for}\quad |s|<\delta\,.
\end{aligned}
\end{equation*}
From the relation
\begin{equation*}
                     \begin{pmatrix}  \del_s X_1  & \del_\theta  X_1  \\
                                       \del_s X_2  & \del_\theta  X_2    \end{pmatrix}
                     \begin{pmatrix}  \del_{X_1} s  &   \del_{X_2} s  \\
                                      \del_{X_1} \theta   &   \del_{X_2} \theta \end{pmatrix}
                                      =
                     \begin{pmatrix}  1 &  0  \\
                                      0 &  1 \end{pmatrix}\,,
 \end{equation*}
one deduces the formula:
\begin{equation} \label{gradients}
\nabla_X\theta  =\frac {\vec \tau ( s,\theta))}{  J(s,\theta) } \quad \hbox{and}\quad
 \nabla_X s  = \vec n(   \theta ) \,.
\end{equation}

Moreover, for $v \in C^1$ and $q\in C^2$ defined within $V_\delta$ one has
\begin{subequations}
\begin{equation}
 \nabla_x \cdot v=\frac1J\Big(\del_s ( J(v\cdot \vec n)) +\del_\theta(v\cdot \vec \tau)\Big)  \,, \label{clas1}
 \end{equation}
  \begin{equation}
  \nabla_x \wedge v = \frac1J\Big(\del_s (J (v\cdot \vec \tau)) -\del_\theta(v\cdot \vec n)\Big) \,,\label{clas2}
  \end{equation}
 \begin{equation}
 \Delta_x q = \frac 1J \del_s (J\del_s q) + \frac1J \del_\theta (  \frac 1J\del_\theta q)  \,.\label{clas3}
 \end{equation}
 \end{subequations}


 \section{Application of the global geodesic coordinates to $C^{0,\alpha}$ weak solutions of  the boundary-value problem (\ref{wi}) and (\ref{wb2}).} \label{Supersuper}

Let $\delta>0$ be small enough, as specified  in section 2, and let $\epsilon \in (0,\delta)$ be given.  Let $\phi :[0,\infty) \mapsto[0,1]$ be a $C^\infty$ non-increasing function such that $\phi (s)=1$ for $s\in [0,\delta-\epsilon]$ and $\phi(s)=0$
for $s\ge \delta\,.$ We consider the function $\phi(d(x,\del\Omega))$ which will be also denoted by $\phi(x)$. Observe that $\phi(x)$ belongs to $C_{\rm{c}}^2(\mathbb{R}^2)$ since $\del \Omega \in C^2\,.$

 Next we  state the main results of this contribution.

    \begin{theorem}\label{Super}
     Let $u \in C^{0,\alpha}(\Omega)$ be a divergence free vector field which is tangential to the boundary $\del \Omega$. Then there exists a unique function  $P$ defined on $\Omega$ with the following properties:
    \begin{enumerate}
    \item $P$ belongs to the space $C^{0,\alpha}(\Omega)$
    and satisfies the estimate:
    \begin{equation*}
    \|P\|_{C^{0,\alpha}(\Omega)} \le C \|u\otimes u \|_{C^{0,\alpha}(\Omega)}\,,
    \end{equation*}
    with a constant $C$ which depends only on $\alpha$ and $\Omega\,.$
    \item Denote by $P(s,\theta)=P(X(s,\theta))$, for $(s,\theta)\in [0,\delta) \times (\R/(L\Z))$\,, then the map $s\mapsto \del_s P(s,\cdot)$  belongs to $C([0,\delta); H^{-2}(\R/(L\Z)))\,,$  which implies that $\del_{\vec n} P$ is well defined on $\del \Omega\,$ with values in $H^{-2}(\del \Omega)$.
 \item    $P$ solves the following boundary-value problem:
    \begin{subequations}
    \begin{equation}
    \hbox{on}\quad  \del\Omega \quad \del_{\vec n} P = \gamma (u\cdot \vec \tau)^2 \,,\label{wneumann}
    \end{equation}
    \begin{equation}
   \hbox{in} \quad \Omega \quad -\Delta P=\big(\nabla\otimes\nabla\big):\big(u\otimes u\big) -\Delta \big(\phi(x)\big(u(x)\cdot \vec n(x)\big)^2\big)
   \end{equation}
   \begin{equation}
  \hbox{and} \quad   \int_\Omega P(x)dx = \int_\Omega \phi(x) \big(u(x)\cdot \vec n(x)\big)^2dx\,.
    \end{equation}
    \end{subequations}
    \end{enumerate}
    \end{theorem}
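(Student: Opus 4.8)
The plan is to obtain $P$ as a limit of regularized pressures and to reduce the whole statement to a single uniform Hölder estimate. First I would take the smooth, divergence-free, boundary-tangent approximations $\ue\in C^\infty(\overline\Omega)$ of Section~3, with $\ue\to u$ in $C^0(\overline\Omega)$ and $\|\ue\|_{C^{0,\alpha}(\Omega)}\le\|u\|_{C^{0,\alpha}(\Omega)}$. Since each $\ue$ is smooth, Theorem~\ref{Grosbasic} (with $k=2$) produces a classical pressure $\pe$ with $-\Delta\pe=(\nabla\otimes\nabla):(\ue\otimes\ue)$ in $\Omega$, $-\del_{\vec n}\pe=(\ue\otimes\ue):\nabla\vec n$ on $\del\Omega$, and $\int_\Omega\pe=0$. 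Setting $\Pe:=\pe+\phi(x)\big(\ue\cdot\vec n\big)^2$, I would verify that $\Pe$ solves the approximate version of the stated problem: applying $-\Delta$ to the definition gives the interior equation; on $\del\Omega$ one has $\phi\equiv1$ and $\del_{\vec n}(\ue\cdot\vec n)^2=2(\ue\cdot\vec n)\,\del_{\vec n}(\ue\cdot\vec n)=0$ since $\ue\cdot\vec n=0$ there, so by the computation behind \eqref{wb} the Neumann datum reduces to the right-hand side $\gamma(\ue\cdot\vec\tau)^2$ of \eqref{wneumann}; and the normalization follows from $\int_\Omega\pe=0$.

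The core is the bound $\|\Pe\|_{C^{0,\alpha}(\Omega)}\le C\|\ue\otimes\ue\|_{C^{0,\alpha}(\Omega)}$ with $C=C(\alpha,\Omega)$ independent of $\eta$. This is precisely \eqref{goal} at the forbidden endpoint $k=0$, so it cannot be quoted from Theorem~\ref{Grosbasic} and must be established directly at the Hölder level, keeping the right-hand side in divergence form. Writing $g^\eta_{ij}:=\ue_i\ue_j-\delta_{ij}\,\phi\,(\ue\cdot\vec n)^2\in C^{0,\alpha}(\overline\Omega)$, the interior forcing is the pure divergence-form expression $-\Delta\Pe=\sum_{i,j}\del_i\del_j g^\eta_{ij}$, while the Neumann datum is $\gamma(\ue\cdot\vec\tau)^2\in C^{0,\alpha}(\del\Omega)$. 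I would then split the estimate with the cut-off: away from $\del\Omega$ the standard boundedness of the second-order Riesz-type operators $\del_i\del_j(-\Delta)^{-1}$ on Hölder spaces controls the interior contribution with no reference to the boundary, while the boundary contribution I would transport to the global geodesic coordinates $(s,\theta)$ and estimate through a $C^{0,\alpha}$ boundary Schauder estimate for the inverse Neumann Laplacian $\DN$, using \eqref{clas1}--\eqref{clas3}. The decisive structural fact, seen in the coordinates where $\del_i=n_i\del_s+\tfrac{\tau_i}{J}\del_\theta$ (recall \eqref{gradients}) and $\vec n$ is independent of $s$, is that the only part of $\sum_{i,j}\del_i\del_j g^\eta_{ij}$ carrying two normal derivatives is $\del_s^2 g^\eta_{nn}$, with $g^\eta_{nn}=\sum_{i,j}n_in_j g^\eta_{ij}=(1-\phi)(\ue\cdot\vec n)^2$; this vanishes for $s\le\delta-\epsilon$, so subtracting $\phi(\ue\cdot\vec n)^2$ removes precisely the one contribution that has no trace on $s=0$, leaving a forcing in which every term carries at least one tangential derivative. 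Closing this boundary Schauder estimate --- a Hölder bound for a Neumann problem whose right-hand side is a distribution, uniform in the regularization --- is where I expect the real work to lie.

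Granting the uniform bound, the passage to the limit is routine. The compact embedding $C^{0,\alpha}(\overline\Omega)\hookrightarrow C^{0,\alpha'}(\overline\Omega)$ for $\alpha'<\alpha$ yields a subsequence $\Pe\to P$ in $C^{0,\alpha'}$, with $\|P\|_{C^{0,\alpha}}\le C\|u\otimes u\|_{C^{0,\alpha}}$ by lower semicontinuity of the seminorm; this is property~(1). Since $\ue\to u$ in $C^0$ one has $\ue\otimes\ue\to u\otimes u$ and $\phi(\ue\cdot\vec n)^2\to\phi(u\cdot\vec n)^2$ in $C^0$, so the interior equation and the normalization pass to the limit in $\mathcal D'(\overline\Omega)$. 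For property~(2) I would integrate the normal identity $\del_s(J\del_s P)=J\Delta P-\del_\theta(\tfrac1J\del_\theta P)$, read off \eqref{clas3}, in $s$, integrating by parts in $s$ to transfer the normal derivatives onto the forcing; since $P\in C^{0,\alpha}$ and, by the cancellation above, the double-normal part of the forcing vanishes near $s=0$, the result lies in $C([0,\delta);H^{-2}(\R/(L\Z)))$, so $s\mapsto\del_s P(s,\cdot)$ is continuous into $H^{-2}$ and its trace at $s=0$ defines $\del_{\vec n}P$. Evaluating this trace, where the normal--normal term drops by \eqref{adjustment-term}, yields \eqref{wneumann}, completing property~(3). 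Uniqueness is then immediate from Proposition~\ref{unicity}: two solutions differ by $r$ with $-\Delta r=0$ in $\Omega$ and $\del_{\vec n}r=0$ on $\del\Omega$, hence $r$ is constant, and the common value of $\int_\Omega P$ forces $r\equiv0$.
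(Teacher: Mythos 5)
Your proposal follows the same skeleton as the paper --- regularize $u$, define $\Pe=\pe+\phi(\ue\cdot\vec n)^2$ precisely so that the double normal derivative cancels, split into interior and boundary pieces with cutoffs, treat the interior piece by the Newtonian potential, work near the boundary in global geodesic coordinates, pass to the limit, and get uniqueness from Proposition~\ref{unicity} --- but it has a genuine gap exactly where the paper spends its effort. The step you describe as ``where I expect the real work to lie'' (a uniform $C^{0,\alpha}$ bound for a Neumann problem whose forcing is only a distribution) is not a detail to be granted: it \emph{is} the theorem, since for $k=0$ no Schauder estimate of the form \eqref{goal} can be quoted. The paper proves it by a further splitting $\Pbe=\Pbe^b+\Pbe^i$ (harmonic with Neumann datum $\gamma(\ue\cdot\vec\tau)^2$, versus forced with homogeneous Neumann datum), and then represents $\Pbe^i$ through the explicit Green function $k(s,\theta;s',\theta')$ of the slab $(0,\delta)\times(\R/(L\Z))$ as in \eqref{Green-Function-bi}, integrating by parts in $s'$ and $\theta'$ to move every derivative of the forcing onto the kernel (the terms $I_1$, $I_2=I_2^i+I_2^b$, $I_3$ of \eqref{Sum-of-three-I}), using crucially that $(\ue\cdot\vec n)(0,\theta')=0$ so no uncontrolled boundary terms arise, and estimating the boundary term $I_2^b$ by the explicit logarithmic form of $k(s,\theta;0,\theta')$. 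None of this appears in your proposal.

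The second, related gap is that the clean estimate $\|\Pe\|_{C^{0,\alpha}(\Omega)}\le C\|\ue\otimes\ue\|_{C^{0,\alpha}(\Omega)}$ cannot come directly out of any cutoff-based localization: localizing with $\phi_i$, $\phi_b$ inevitably produces commutator terms such as $2(\nabla\phi_b)\cdot\nabla\Pe+(\Delta\phi_b)\Pe$, which are not controlled by $\|\ue\otimes\ue\|_{C^{0,\alpha}(\Omega)}$ but only (after moving derivatives onto the smooth cutoffs and kernels) by $\|\Pe\|_{L^\infty(\Omega)}$. What one actually obtains is \eqref{calphapert}, with the extra term $D\|\Pe\|_{L^\infty(\Omega)}$ on the right. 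Removing it requires a separate argument, Proposition~\ref{lebeau}: assuming the ratio $\|\Pe\|_{L^\infty(\Omega)}/\|\ue\otimes\ue\|_{C^{0,\alpha}(\Omega)}$ blows up, normalizing $G^\eta=\Pe/\|\Pe\|_{L^\infty(\Omega)}$, using Arzel\`a--Ascoli together with the trace formula of Proposition~\ref{tracelim} to pass the Neumann condition to the limit, and concluding that the limit $G$ solves the homogeneous Neumann problem with zero mean, hence $G=0$, contradicting $\|G\|_{L^\infty(\Omega)}=1$. Your proposal contains no counterpart of this compactness step, and without it the uniform bound --- and therefore everything downstream: the compactness of $\Pe$, and properties (1)--(3) of the statement --- does not close. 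Note also that the trace result you sketch for property~(2) is needed \emph{before} the limit (uniformly in $\eta$), both to pass the boundary condition to the limit and inside the contradiction argument, which is why the paper proves Proposition~\ref{tracelim} at the level of $\Pbe$ rather than for the limit $P$ alone.
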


Note that by using the global geodesic coordinates the right-hand side of \eqref{wb2} takes the form:
$$
(u\otimes u): \nabla \vec n =  \gamma (u\cdot \vec \tau)^2 \quad \hbox{on} \quad \del \Omega\,,
$$
which clarifies  the right-hand side of \eqref{wneumann}.

\begin{remark}
   As stated in the introduction, the $C^{0,\alpha}(\Omega) $ regularity  for the pressure, $p$, is not convenient enough to deduce,
   as it is usually  done in the case of classical solutions,  from the boundary condition $u\cdot \vec n=0$\,,  the left-hand side of relation \eqref{wb}, namely,
   \begin{equation}\label{traceclassic}
  \hbox{ on}\quad  \del\Omega  \quad  \del_{\vec n} p =\big(\nabla \cdot \big(u\otimes u\big)\big) \cdot \vec n\,.
  \end{equation}
The quantity $\nabla \cdot \big(u\otimes u\big)$  may lose any meaning on the boundary, because for $u\in C^{0,\alpha}\,$ it is  defined only in the sense of distribution. Therefore, since the boundary is $C^2$ one is tempted to use instead the right-hand side of relation \eqref{wb} for the boundary condition on the pressure, namely,
\begin{equation}\label{traceclassic1}
\hbox{ on}\quad  \del\Omega  \quad  \del_{\vec n} p = \gamma (u\cdot \vec \tau )^2\,,
\end{equation}
which is equivalent to the original boundary condition for classical solutions. However, we realised that for $u\in C^{0,\alpha}\,$ the term $\del_{\vec n} p$ might not make sense on its own at the boundary.
Alternatively, we have argued that the pressure, $p$,  should satisfy the boundary condition \eqref{wb2} instead, which is equivalent to \eqref{traceclassic1} when $u\in C^{0,\alpha}\,$ with $\alpha > \frac{1}{2}\,$, and  which is a genuine weak formulation of the boundary condition for the pressure in this case, as it is conspicuous from the statement of the next corollary.

%

  \end{remark}
   From the above theorem,  considering the function
  $
  p= P-\phi(x) (u\cdot \vec n )^2\,,
 $
 one deduces the following:

\begin{corollary} \label{super} Let $u \in C^{0,\alpha}(\Omega)$ be a divergence free vector field which is tangential to the boundary $\del \Omega$. Then there exists a unique function $p\in C^{0,\alpha}(\Omega)$ which is a
 solution of the boundary-value problem
 \begin{equation}
 \hbox{in} \quad \Omega \quad-\Delta p = \big( \nabla\otimes \nabla \big) : \big (u\otimes u\big ) \,, \quad \hbox{ and } \int_\Omega p(x)dx =0
 \end{equation}
 and satisfies    the boundary condition (\ref{wb2}), i.e.,
 \begin{equation}
 \hbox{on} \quad \del \Omega \quad \del_{\vec n} \big(p+ (u \cdot \vec n )^2\big) =\gamma (u\cdot \vec \tau )^2\,.
\end{equation}
Moreover,
\begin{equation}
\|p\|_{C^{0,\alpha}(\Omega)} \le C \|u\otimes u\|_{C^{0,\alpha}(\Omega)}\,,
\end{equation}
for some positive constant which depends only on $\alpha$ and $\Omega$\,.
 \end{corollary}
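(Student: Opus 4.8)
The plan is to derive Corollary \ref{super} as an almost immediate consequence of Theorem \ref{Super} by the change of variable $p = P - \phi(x)(u\cdot\vec n)^2$. First I would verify that this $p$ inherits the correct $C^{0,\alpha}$ regularity: since $u\in C^{0,\alpha}(\Omega)$ and the boundary is $C^2$, the normal extension $\vec n(x) = \nabla_x d(x,\del\Omega)$ is $C^1$, and the cutoff $\phi\in C^2_{\mathrm c}$; hence the correction term $\phi(x)(u\cdot\vec n)^2$ is a product of $C^{0,\alpha}$ and $C^1$ functions and therefore lies in $C^{0,\alpha}(\Omega)$, with norm controlled by $\|u\otimes u\|_{C^{0,\alpha}(\Omega)}$ times constants depending only on $\phi$ and $\Omega$. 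Combined with the estimate for $P$ from part (1) of the theorem, the triangle inequality yields $\|p\|_{C^{0,\alpha}(\Omega)}\le C\|u\otimes u\|_{C^{0,\alpha}(\Omega)}$ for a constant depending only on $\alpha$ and $\Omega$.

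Next I would check that $p$ satisfies the claimed interior equation and the mean-zero normalization. For the Poisson equation, subtracting $\Delta\big(\phi(x)(u\cdot\vec n)^2\big)$ from both sides of the equation in part (3) of Theorem \ref{Super} cancels the correction term exactly, leaving $-\Delta p = (\nabla\otimes\nabla):(u\otimes u)$ in $\Omega$ in the sense of distributions. For the normalization, integrating $p = P - \phi(x)(u\cdot\vec n)^2$ over $\Omega$ and invoking the mean condition $\int_\Omega P\,dx = \int_\Omega \phi(x)(u\cdot\vec n)^2\,dx$ from the theorem gives $\int_\Omega p\,dx = 0$ directly.

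The boundary condition is the one point requiring genuine care. On $\del\Omega$ the distance $d(x,\del\Omega)=s$ vanishes and $\phi\equiv 1$ near the boundary, so $\phi(x)(u\cdot\vec n)^2$ agrees with $(u\cdot\vec n)^2$ in a one-sided neighborhood of $\del\Omega$; thus $P = p + (u\cdot\vec n)^2$ there, and the two functions have the same normal-derivative structure at the boundary. I would use part (2) of the theorem, which guarantees that $\del_{\vec n} P$ is well defined on $\del\Omega$ with values in $H^{-2}(\del\Omega)$, to make sense of $\del_{\vec n}\big(p+(u\cdot\vec n)^2\big) = \del_{\vec n} P$ as an element of $H^{-2}(\del\Omega)$. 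The Neumann condition \eqref{wneumann}, namely $\del_{\vec n} P = \gamma(u\cdot\vec\tau)^2$, then translates verbatim into $\del_{\vec n}\big(p+(u\cdot\vec n)^2\big) = \gamma(u\cdot\vec\tau)^2$ on $\del\Omega$, which is exactly \eqref{wb2} rewritten in geodesic coordinates via the identity $(u\otimes u):\nabla\vec n = \gamma(u\cdot\vec\tau)^2$.

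The main obstacle is the legitimacy of replacing $\del_{\vec n}(u\cdot\vec n)^2$ by $\del_{\vec n}\big(\phi(x)(u\cdot\vec n)^2\big)$ at the boundary and, more subtly, ensuring that the trace manipulation is consistent with the weak $H^{-2}$ framework rather than a pointwise one. Because $\phi\equiv 1$ on $[0,\delta-\epsilon]$ in the $s$-variable, the two functions coincide identically near $\del\Omega$, so all their one-sided normal traces and normal-derivative traces agree; the only thing to confirm is that this local coincidence is enough to transfer the $H^{-2}$-valued trace statement of part (2), which it is, since the trace at $s=0$ depends only on the germ of the function near the boundary. Uniqueness of $p$ follows from uniqueness of $P$ in Theorem \ref{Super} (or equivalently from Proposition \ref{unicity}), since the map $P\mapsto p=P-\phi(x)(u\cdot\vec n)^2$ is a bijection once $u$ is fixed.
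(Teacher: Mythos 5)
Your proposal is correct and follows exactly the paper's own route: the paper deduces Corollary \ref{super} from Theorem \ref{Super} by the very substitution $p = P - \phi(x)(u\cdot\vec n)^2$, and your verification of the regularity estimate, the Poisson equation, the normalization, the $H^{-2}$-valued boundary trace (using that $\phi\equiv 1$ near $\del\Omega$), and uniqueness simply fills in the details the paper leaves implicit.
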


  The proofs of Theorem \ref{Super} and Corollary \ref{super} are organized as follows:

\begin{enumerate}

\item We start by constructing a  regularization $u^\eta\in C^{\infty}(\overline{\Omega})$ of the velocity vector field $u\in C^{0,\alpha}(\Omega)$, for $\eta >0$ small enough, and which converges in the $C^0(\overline{\Omega})$ norm to $u$ as $\eta \to 0$; moreover it also  satisfies the estimate $\|u^\eta\|_{C^{0,\alpha}(\Omega)}\le C \|u \|_{C^{0,\alpha}(\Omega)}$, for some positive constant $C$ which is independent of $\eta$ and $\alpha$. In particular, we require $u^\eta$ to be divergence free and tangential to the boundary. This in turn allows us to invoke Theorem \ref{Grosbasic} for the case when $k=2$ (with $u$ replaced by $u^\eta$) to obtain the corresponding regularized pressure $p^\eta\in C^{2,\alpha}(\Omega)$. Then we consider near the boundary  modification of the regularized pressure by introducing the $C^{2,\alpha}(\Omega)$ function
$$
P^\eta(x) =p^\eta(x) +\phi(x)\big(u^\eta(x)\cdot \vec n(x)\big)^2\,,
$$
for all $x\in \overline{\Omega}$. Note that in this classical context, and by virtue of  \eqref{adjustment-term}, one has
\begin{equation*}
 \hbox{on} \quad \del \Omega \quad \del_{\vec n}P^\eta = \del_{\vec n}p^\eta\,.
\end{equation*}

\item Next we decompose $P^\eta$ into two functions  $P_b^\eta$ and $P_i^\eta$, with overlapping supports, where the support of   $P_b^\eta$ is near the boundary of $\Omega$, and the support of $P_i^\eta$ is a compact subset in the   interior of $\Omega$.
\item Representing $\Pe$ in $V_\delta\cap \overline{\Omega}$, in terms of the global geodesic coordinates near the boundary, we then establish a ``trace" theorem in which we prove  the ``uniform continuity" with respect to $s \in [0,\delta]$, i.e., up to the boundary, of the function $\del_s P^\eta(s,\cdot)\,$ with values in $H^{-2}(\R/(L\Z))\,.$ Consequently we accomplish  the estimate
\begin{equation}\label{Full-terms-estimate}
\|P^\eta\|_{C^{0,\alpha}(\Omega)} \le \|P_b^\eta\|_{C^{0,\alpha}(\Omega)} + \|P_i^\eta\|_{C^{0,\alpha}(\Omega)} \le  C\|(u^\eta\otimes u^\eta)\|_{C^{0,\alpha}(\Omega)}+D\|P^\eta\|_{L^\infty(\Omega)}\,,
\end{equation}
with positive constants $C$ and $D$ that are independent of $\eta$, and which depend only on $\Omega$ and $\alpha$.
\item Taking  advantage of the fact that   the constants $C$ and $D$ in \eqref{Full-terms-estimate}  are independent of $\eta$ we can show that
$$
\frac{\|P^\eta\|_{L^\infty(\Omega)}}{\|(u^\eta\otimes u^\eta)\|_{C^{0,\alpha}(\Omega)}}
$$
remains bounded for small values of $\eta$. This allows us to replace the constant $D$ in \eqref{Full-terms-estimate}  by zero on the expense of a larger constant $C$.
Eventually, insisting on the fact that the constant $C$ in  \eqref{Full-terms-estimate} depend only on $\alpha$ and $\Omega$ and that $D=0$ one can let $\eta \to 0\,,$  which allows us  to complete the proof.
 \end{enumerate}
\subsection{Adequate regularization of the velocity field}\label{regu}

The regularization process is based on the following (classical):
\begin{lemma}\label{regularized-velocity}
Let  $u\in C^{0,\alpha} (\Omega)$ be a divergence free and tangential to the boundary vector field defined in a bounded simply connected domain $\Omega$ with $C^2$ boundary. Then,  there exists an approximation family  $u^\eta \in C^{\infty }(\overline{\Omega})$\, of divergence free vector fields which are tangential  to the boundary and which converges to $u$ in the $C^0(\overline{\Omega})$ norm as $\eta\rightarrow 0$\,.  Moreover,
\begin{equation}\label{Holder-bound-regularization}
\|u^\eta\|_{C^{0,\alpha} (\Omega)} \le C \|u\|_{C^{0,\alpha} (\Omega)}\,,
\end{equation}
for some positive constant $C$ which is independent of $\eta$ and $\alpha$.
\end{lemma}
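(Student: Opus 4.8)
The plan is to reduce the vector problem to the regularization of a single scalar stream function, and then to mollify that stream function in a way that \emph{exactly} preserves its boundary trace. If this is done, the divergence-free and tangency constraints are inherited for free by setting $u^\eta:=\nabla^\perp\psi^\eta=(-\del_{x_2}\psi^\eta,\del_{x_1}\psi^\eta)$, and all the work is pushed onto a scalar estimate.

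\textbf{Stream-function reduction.} Since $\Omega$ is simply connected with a connected boundary and $u$ is divergence free with $u\cdot\vec n=0$ on $\del\Omega$, there is a stream function $\psi$ with $u=\nabla^\perp\psi$; as $u\in C^{0,\alpha}(\Omega)$ we get $\psi\in C^{1,\alpha}(\overline\Omega)$. Using the boundary parametrization $\theta\mapsto x(\theta)$ of Section 2 one computes $u\cdot\vec n=\del_\theta\big(\psi(x(\theta))\big)$ along $\del\Omega$, so $u\cdot\vec n=0$ forces $\psi$ to be constant on the curve $\del\Omega$, and we normalize $\psi\equiv 0$ there. Since $\nabla\psi$ and $u$ have the same modulus and the same H\"older seminorm, while $\psi$ vanishes on $\del\Omega$, Poincar\'e's inequality gives $\|\psi\|_{C^{1,\alpha}(\overline\Omega)}\le C\|u\|_{C^{0,\alpha}(\Omega)}$ with $C=C(\Omega)$ independent of $\alpha$.

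\textbf{Boundary-respecting regularization of $\psi$.} I would split $\psi=\phi\psi+(1-\phi)\psi$ with the cutoff $\phi$ of Section 3. The interior piece $(1-\phi)\psi$ has compact support in $\Omega$ and is regularized by ordinary Cartesian mollification $\rho_\eta$, which for small $\eta$ stays supported away from $\del\Omega$, so its trace remains zero. The boundary piece $\phi\psi$ is supported in $V_\delta\cap\overline\Omega$ and vanishes on $\del\Omega$; I pass to the geodesic coordinates $(s,\theta)$ of Section 2, in which $\del\Omega=\{s=0\}$ and $(\phi\psi)(0,\theta)=0$, and extend it to $s\in(-\delta,\delta)$ by \emph{odd reflection} in $s$. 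Because $(\phi\psi)(0,\cdot)=0$ implies both $\del_s(\phi\psi)$ and $\del_\theta(\phi\psi)$ reflect to continuous functions (the first evenly, the second — vanishing at $s=0$ since $\del_\theta(\phi\psi)(0,\cdot)=0$ — oddly), the reflected function is still $C^{1,\alpha}$. Mollifying it in $(s,\theta)$ with a kernel that is even in $s$ yields a smooth function that is again odd in $s$, hence vanishes identically on $\{s=0\}$; transported back to $x$ this regularizes $\phi\psi$ with exactly zero trace. Adding the two pieces gives $\psi^\eta$ with $\psi^\eta|_{\del\Omega}=0$, and $u^\eta:=\nabla^\perp\psi^\eta$ satisfies $\nabla\cdot u^\eta\equiv 0$ identically and $u^\eta\cdot\vec n=\del_\theta\psi^\eta|_{s=0}=0$, i.e.\ it is divergence free and tangential. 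Since convolution never increases a H\"older seminorm, each step is bounded in $C^{1,\alpha}$ by the corresponding norm of $\psi$ with a constant independent of $\eta$ and of $\alpha$, which yields $\|u^\eta\|_{C^{0,\alpha}(\Omega)}\le C\|u\|_{C^{0,\alpha}(\Omega)}$, i.e.\ \eqref{Holder-bound-regularization}, while $\psi^\eta\to\psi$ in $C^1(\overline\Omega)$ gives $u^\eta\to u$ in $C^0(\overline\Omega)$.

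\textbf{Main obstacle.} The delicate part is the \emph{uniform-in-$\eta$ (and $\alpha$)} $C^{1,\alpha}$ control through the near-boundary construction, rather than the easy convergence. One must verify that the odd reflection genuinely preserves $C^{1,\alpha}$ — this is exactly where $\del_\theta\psi(0,\cdot)=0$, a consequence of $\psi\equiv 0$ on $\del\Omega$, is essential, since a naive cut-off correction of the trace would spoil the H\"older seminorm for small $\alpha$ — and that passing to and from the geodesic chart, whose Jacobian $J=1+s\gamma$ and frame $(\vec\tau,\vec n)$ are controlled by \eqref{normal-tangent-derivatives} and Section 2, introduces no $\eta$-dependent constants. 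The interior piece is genuinely $C^\infty$, while near the boundary the regularity of $u^\eta$ is limited only by that of the chart; this is harmless for what follows, since only the $C^{2,\alpha}$ regularity needed to apply Theorem~\ref{Grosbasic} with $k=2$ is ever used. Finally one checks that the overlap of the two mollified pieces contributes only lower-order, $\eta$-independent terms.
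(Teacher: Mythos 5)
Your proposal is correct and follows essentially the same route as the paper: reduction to a stream function $\psi\in C^{1,\alpha}(\overline{\Omega})$ vanishing on $\del\Omega$, the cutoff splitting $\psi=\phi\psi+(1-\phi)\psi$, Cartesian mollification of the compactly supported interior piece, odd reflection in $s$ of the boundary piece in the global geodesic coordinates followed by mollification with a kernel even in $s$ (so the trace on $\{s=0\}$ stays exactly zero), and finally $u^\eta=\nabla^\perp\psi^\eta$. The only cosmetic difference is that the paper produces the stream function by solving $-\Delta\Psi=\nabla\wedge u$ in $H^1_0(\Omega)$ and then verifying $u=\nabla^\perp\Psi$ via a Liouville-type argument, rather than invoking the Poincar\'e lemma on the simply connected domain directly as you do.
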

\begin{remark}
By a compactness argument it follows from (\ref{Holder-bound-regularization}) that the  convergence also holds in the $C^{0,\beta}(\Omega)$ norm for any $\beta \in (0,\alpha)$ as $\eta \to 0$.
\end{remark}
\begin{proof} of the Lemma \ref{regularized-velocity}:
Let  $\Psi$ by the unique solution in $H^1_0(\Omega)$ of the elliptic boundary-value problem:
\begin{equation}
\hbox{in }\quad \Omega  \quad -\Delta \Psi = \nabla \wedge u \quad \hbox{and on}\quad  \del \Omega \quad \Psi =0\,,
\label{cartan1}
\end{equation}
where the equation holds in $H^{-1}(\Omega)$ and the boundary condition in the trace sense. Consider the vector field $v=u-\nabla\wedge \Psi$ which satisfies  the relations $\nabla\wedge v=0\,$ and $\nabla\cdot v=0\,$ in $\mathcal{D}'(\Omega)$; moreover,  $v\cdot n =0$ in $H^{-1/2}(\del \Omega)$. Therefore, $\Delta v =0$ in $\mathcal{D}'(\Omega)$, and consequently $v\in C^\infty(\Omega)$.  Therefore, since $\nabla\wedge v=0\,$ and $\Omega\,$ is simply connected we have $v=\nabla q$ for some
$q\in C^\infty(\Omega)\,.$ Since $\nabla\cdot v=0$ in $\Omega$ and $v\cdot\vec n=0$ on $\del \Omega$ one concludes:
\begin{equation*}
\hbox{in }\quad \Omega \quad -\Delta q=0\quad \hbox{and on}\quad  \del \Omega \quad \del_{\vec n} q=0
\end{equation*}
 which implies that $q$ is constant. Thus,  $\nabla^\bot \Psi= u\in C^{0,\alpha}(\Omega)\,$ which implies that $\Psi \in C^{1,\alpha} (\Omega)$.

Next, we recall from section \ref{Supersuper} the function $\phi(x)\in C_{\rm{c}}^2(\mathbb{R}^2)$  and  that   $\rm{supp}(\phi )\subset \overline{V_\delta}$.  We decompose
\begin{equation*}
\Psi = \Psi_b + \Psi_i := \phi \Psi + (1-\phi)\Psi\,.
\end{equation*}
Consider the mollifier
\begin{equation*}
\begin{aligned}
&\rho^\eta(x)= \frac{1}{\eta^2} \rho(\frac{ x}{\eta}) \quad
   \hbox{with}\quad \rho \in C_{\rm{c}}^\infty(\mathbb{R}^2)\, \quad \hbox{is a radial function} \\
   &\quad \rho(x)\ge 0\,, \quad \rm{supp}(\rho ) \subset \{|x| \le 1\} \quad  \hbox{and} \quad \int_{\R^2} \rho(x) dx=1\,.
 \end{aligned}
 \end{equation*}
 Since $\Psi_i \in C_{\rm{c}}^{1,\alpha}(\Omega)$ then for $\eta$ small enough the function
  \begin{equation*}
  \Psi_i^\eta=\rho_\eta \ast \Psi_i \in C_{\rm{c}}^\infty(\Omega)\,.
  \end{equation*}
 Moreover, $\Psi_i^\eta$ converges   in the $C^1(\overline{\Omega})$ norm to $\Psi_i^\eta$,  and
$\|\Psi_i^\eta\|_{C^{1,\alpha} (\Omega)} \le C \|\Psi\|_{C^{1,\alpha} (\Omega)}$\,,
with a positive constant $C$ which is independent of $\alpha$ and $\eta$.

  To prove the same result for $\Psi_b$ we use the global geodesic coordinates introduced above. Since the  mollifier $\rho(s,\theta)$ is a radial function then it is an even function with respect to the $s$ variable, i.e., $\rho (s,\theta)= \rho(-s,\theta)$. Next,  we consider the odd extension of  the function $\Psi_b(s,\theta)$ with respect the $s$ variable, namely, we define:
  \begin{equation*}
  \tilde{\Psi}_b(s,\theta) =\left\{\begin{aligned}  \Psi_b(s,\theta)  \,\, \, \mbox{if} &\,\,\,   s \ge 0\\
                                                    -\Psi_b(-s,\theta) \, \,\, \mbox{if}  & \,\, \,  s  \le 0
                                                     \end{aligned}   \right. \,.
                                                     \end{equation*}
  Observe that $\tilde{\Psi}_b \in C_{\rm{c}}^{1,\alpha} (\R \times (\R/(L\Z)))$  satisfying $\tilde{\Psi}_b(0,\theta)=0\,.$  As a consequence
$\tilde{\Psi}^\eta_b:= \rho_\eta\ast \tilde{\Psi}_b \in  C^\infty_{\rm{c}} (\R \times (\R/(L\Z)))$, satisfying $\tilde{\Psi}^\eta_b(0,\theta)=0\,.$ Moreover, $\tilde{\Psi}^\eta_b$ converges in the $C^{1} (\R \times (\R/(L\Z)))$ norm, and in particular in  $C^{1} (\overline{\Omega})$ norm, to $\tilde{\Psi}_b$ as $\eta\rightarrow 0$. In addition, one can easily see that $\|\tilde{\Psi}^\eta_b\|_{C^{1,\alpha} (\Omega)} \le C \|\Psi\|_{C^{1,\alpha} (\Omega)}$\,,
with a positive constant $C$ which is independent of $\alpha$ and $\eta$.

Taking $u^\eta = \nabla^\bot \Psi^\eta$ and combining the above arguments one can complete the proof.

\end{proof}

As a consequence of the above construction of $u^\eta$ we invoke Theorem \ref{Grosbasic}, for the case $k=2$, to show that there exists a unique solution $p^\eta\in C^{2,\alpha}(\Omega)\,$ of the boundary-value problem:
\begin{equation} \label{regu2}
\hbox{ in} \quad \Omega  \quad -\Delta \pe = \big(\nabla\otimes \nabla\big) :\big(\ue\otimes\ue\big) \,, \quad\hbox{ on}\quad \del\Omega  \quad  \del_{\vec n} \pe = \gamma(\ue\cdot\vec\tau)^2\,
\quad\hbox{ and }\quad \int_\Omega \pe(x) dx =0\,.
\end{equation}

\subsection{ Boundary and interior functions}
To establish the uniform, with respect to $\eta$, $C^{0,\alpha}$ regularity estimate  for the pressure $p^\eta$ it seems compulsory to introduce different treatment of $p^\eta$ in the interior of $\Omega$, away from the boundary,  and near the boundary $\del\Omega\,.$ Therefore, besides the numbers $ \delta>0$ and $\epsilon\in (0,\delta)$  used before in the construction of the global geodesic representation of the neighborhood $V_{ \delta}\,$ and the cut-off function $\phi(x)\,,$ we introduce the following  positive numbers satisfying:
$$
0<\delta_1<\delta_2-\epsilon<  \delta_3  <\delta-2\epsilon\,.
$$
Moreover, for $s\in [0,\infty)$ we introduced the following three functions $s\mapsto \phi (s)\,$ (defined earlier), $s\mapsto \phi_b(s)$ and $\phi_i(s)$  ($b$ stands for boundary and $i$ for interior) belonging to $C^\infty([0,\infty))\,$ with the following properties:
 \begin{equation*}
 \phi (s)=\left\{\begin{aligned}  1  \, \, \mbox{if} &\,\,  0\le s\le\delta-\epsilon\\
                                                    0\, \, \mbox{if}  & \,\,   s\ge \delta
                                                     \end{aligned}  \right.\,,
\quad \phi_i(s)=\left\{\begin{aligned}  0  \, \,\mbox{if} &\, \, 0\le  s\le \delta_1\\
                                                    1\,\,  \mbox{if}  & \, \,  s\ge\delta_2-\epsilon
                                                                                                           \end{aligned}  \right.
\quad \mbox{and}\quad \phi_b =\left\{\begin{aligned}  1 \,\, \mbox{if} &\,\,  0\le s<\delta_3 + \epsilon \\
                                                     0 \, \, \mbox{if}  & \,\,   s\ge \delta-\epsilon
                                    \end{aligned}  \right. \,,
\end{equation*}
where $\phi,\phi_b$ are non-increasing and $\phi_i$ is non-decreasing.

As before, with the absence of confusion, for $\delta$ small enough we denote by

\begin{equation*}
\phi  (x) = \phi (d(x,\del\Omega))\,, \phi _b(x) = \phi_b(d(x,\del\Omega))\quad \hbox{and} \quad \phi_i(x) = \phi_i (d(x,\del\Omega))
\end{equation*}
which are  $C^2(\overline \Omega)\,.$

With $u^\eta$ as in section \ref{regu} and $p^\eta$ the classical solution of the boundary-value problem (\ref{regu2}) we define the following functions:
\begin{equation} \label{depart} \begin{aligned}
&\Pe(x)= \pe(x) + \phi(x) (\ue(x)\cdot \vec n(x))^2 \,, \\
&\Pei(x)=\phi_i(x) \Pe(x) =\phi_i(x)( (\pe(x) + \phi(x) (\ue(x))\cdot \vec n(x))^2)\,,\, \\
&\Pbe(x)=\phi_b(x) \Pe(x) = \phi_b(x)((\pe(x)  + (\ue(x)\cdot \vec n(x))^2)\,,
\end{aligned}
\end{equation}
where we used above  the relation $\phi_b(x)\phi(x)=\phi_b(x)$.

 \begin{center}
 \begin{figure}
 \includegraphics[width=13cm]{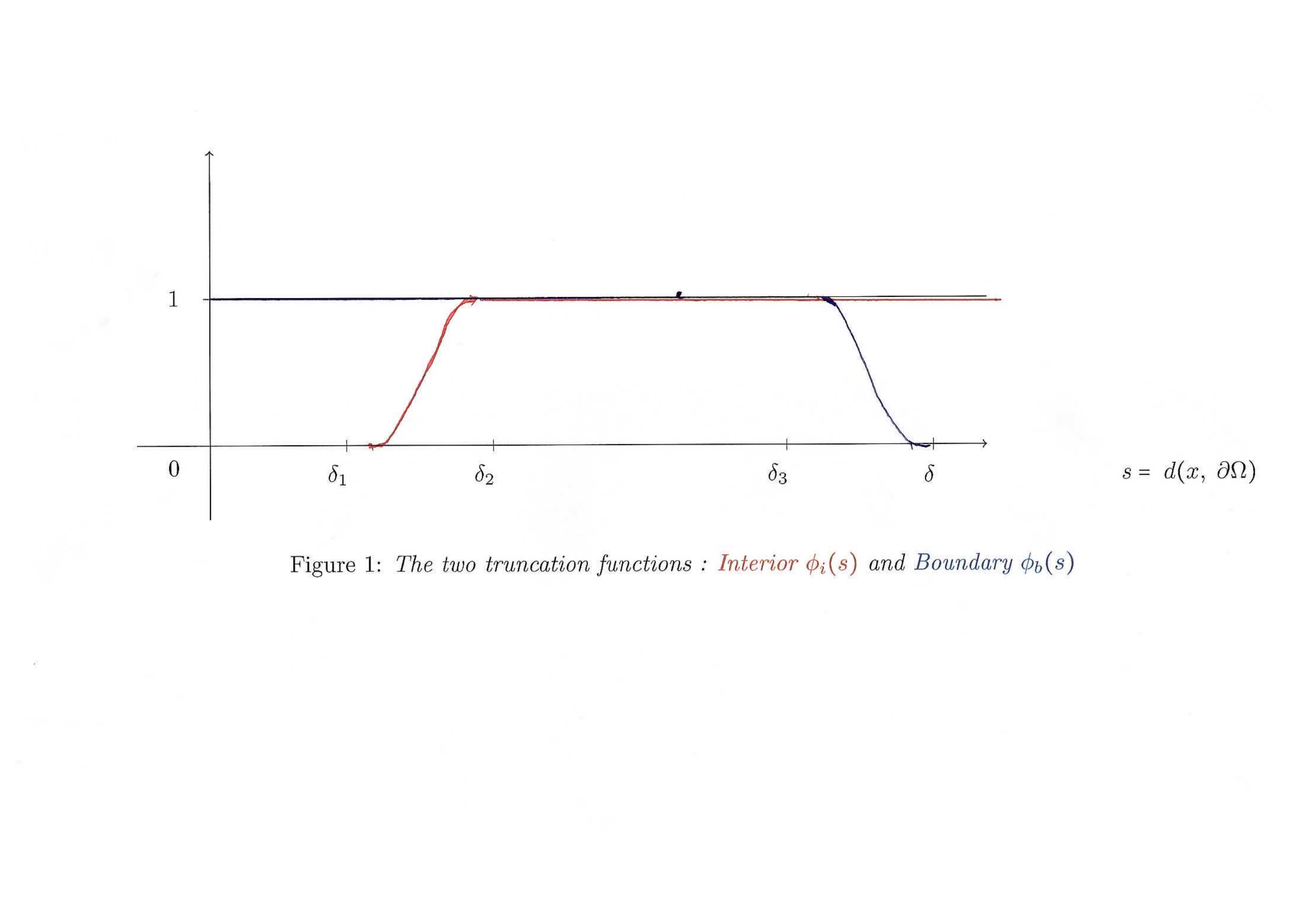}
\end{figure}
 \end{center}

\subsection{ Uniform estimates for $\Pe$}
In the next two sections we establish uniform estimates in $\eta$ for $\Pe$. To this end we take advantage of the above overlapping decomposition of  $\Pe$ into $\Pei$ and $\Pbe$. A first estimate comes directly from the definition of $\Pei$ and this is the objective of:
\begin{proposition} \label{Pei} The function $\Pei$ defined by (\ref{depart}) satisfies the estimate
\begin{equation}
\|\Pei\|_{C^{0,\alpha}(\Omega)} \le C_i\|u^\eta\otimes u^\eta \|_{C^{0,\alpha}(\Omega)} +D_i  \|\Pe \|_{L^\infty (\Omega)}\,, \label{Piii}
\end{equation}
with positive constants $C_i$ and $D_i$ which depend only on $\alpha$ and $\Omega\,$ and in particular they  are independent of $\eta\,.$
\end{proposition}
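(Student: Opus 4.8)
The plan is to exploit that, in contrast with $\Pbe$, the function $\Pei=\phi_i\Pe$ is supported strictly in the interior of $\Omega$: since $\phi_i(s)=0$ for $0\le s\le\delta_1$, the support of $\Pei$ stays at distance at least $\delta_1$ from $\del\Omega$. Consequently no boundary condition enters this estimate, and I may treat $\Pei$ by purely interior (global, constant-coefficient) elliptic theory after extending it by zero to a large torus $\T^2\supset\overline{\Omega}$ (equivalently, using the Newtonian potential on $\R^2$, all data below being compactly supported). I will invoke two classical facts on such a domain: (a) the second-order Riesz transforms $\del_k\del_\ell(-\Delta)^{-1}$ are bounded on $C^{0,\alpha}$ for $\alpha\in(0,1)$; and (b) the inverse Laplacian of a compactly supported $L^\infty$ function lies in $C^{1,\gamma}$ for every $\gamma\in(0,1)$, with $C^{1,\gamma}$-norm controlled by the $L^\infty$-norm of the datum.

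First I split according to \eqref{depart}, writing $\Pei=\phi_i\pe+\phi_i\phi\,(\ue\cdot\vec n)^2$. The second summand is a fixed $C^2$ cutoff times $(\ue\cdot\vec n)^2$; since $\vec n$ and the cutoffs depend only on $\Omega$, its $C^{0,\alpha}$-norm is bounded directly by $C\|\ue\otimes\ue\|_{C^{0,\alpha}(\Omega)}$. Hence it remains to estimate $\phi_i\pe$, which is where the elliptic inversion is used. Writing $w_{k\ell}=\ue_k\ue_\ell$ and recalling from \eqref{regu2} that $-\Delta\pe=\sum_{k,\ell}\del_k\del_\ell w_{k\ell}$, I compute
\begin{equation*}
-\Delta(\phi_i\pe)=\phi_i\sum_{k,\ell}\del_k\del_\ell w_{k\ell}-2\nabla\phi_i\cdot\nabla\pe-(\Delta\phi_i)\pe .
\end{equation*}
The key algebraic step is to commute the cutoff through the double derivative,
\begin{equation*}
\phi_i\sum_{k,\ell}\del_k\del_\ell w_{k\ell}=\sum_{k,\ell}\del_k\del_\ell(\phi_i w_{k\ell})-\sum_{k,\ell}\del_\ell\big((\del_k\phi_i)w_{k\ell}\big)-\sum_{k,\ell}\del_k\big((\del_\ell\phi_i)w_{k\ell}\big)+\sum_{k,\ell}(\del_k\del_\ell\phi_i)w_{k\ell},
\end{equation*}
and to rewrite the two remaining terms in divergence form, $-2\nabla\phi_i\cdot\nabla\pe-(\Delta\phi_i)\pe=-2\nabla\cdot(\pe\nabla\phi_i)+\pe\,\Delta\phi_i$, so that no undifferentiated derivative of $\pe$ or of $w$ survives except inside a single overall divergence.

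Applying $(-\Delta)^{-1}$ to $-\Delta(\phi_i\pe)$ term by term then yields the estimate. The principal term $\sum_{k,\ell}\del_k\del_\ell(\phi_i w_{k\ell})$ becomes $\sum_{k,\ell}R_kR_\ell(\phi_i w_{k\ell})$, bounded in $C^{0,\alpha}$ by $C\|\ue\otimes\ue\|_{C^{0,\alpha}}$ by fact (a). The terms in which at least one derivative falls on $\phi_i$ while the surviving factor is $w$ are single or zeroth order derivatives of $C^{0,\alpha}$ data, so their potentials lie in $C^{1,\alpha}$, respectively $C^{2,\alpha}$, and are again bounded by $C\|\ue\otimes\ue\|_{C^{0,\alpha}}$. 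Finally, in the two terms $-2\nabla\cdot(\pe\nabla\phi_i)$ and $\pe\,\Delta\phi_i$ I substitute $\pe=\Pe-\phi(\ue\cdot\vec n)^2$: the $\phi(\ue\cdot\vec n)^2$ part is $C^{0,\alpha}$ and is absorbed as before, while the $\Pe$ part is only bounded in $L^\infty$. For the latter, fact (b) gives $\del_k(-\Delta)^{-1}(\Pe\,\del_k\phi_i)\in C^{0,\alpha}$ with norm $\le C\|\Pe\|_{L^\infty}$, and $(-\Delta)^{-1}(\Pe\,\Delta\phi_i)\in C^{1,\alpha}$ with norm $\le C\|\Pe\|_{L^\infty}$. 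Collecting all contributions (and absorbing the harmless additive constant coming from the torus mean) produces \eqref{Piii}, with constants $C_i,D_i$ depending only on $\alpha$ and $\Omega$ through the fixed cutoffs and $\vec n$, hence independent of $\eta$.

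The only genuinely delicate point — and the one that makes the whole scheme work downstream — is that every term containing $\Pe$ arises with at least one derivative landing on the cutoff $\phi_i$, so that $\Pe$ itself is never differentiated and only its $L^\infty$-norm is needed; this is exactly what later permits the absorption of $D_i\|\Pe\|_{L^\infty}$. Securing this requires the divergence-form rewriting above together with the $C^{1,\gamma}$ potential estimate (b), while the interior localization is what allows one to invoke the global Riesz and Newtonian-potential bounds with no boundary contribution whatsoever. The remaining per-term Hölder estimates are routine applications of Calder\'on--Zygmund and potential theory.
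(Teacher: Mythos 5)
Your proof is correct and follows essentially the same route as the paper: derive the compactly supported interior equation for $\Pei$, represent the solution via the Newtonian potential, and conclude by H\"older estimates for the potential (the paper cites Theorem 3.4.1 of Krylov for exactly this). Your write-up merely makes explicit the steps the paper leaves implicit, namely commuting the cutoff through the second derivatives and putting the $\Pe$-terms in divergence form so that only $\|\Pe\|_{L^\infty(\Omega)}$ is ever needed.
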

\begin{proof}
From  \eqref{regu2} and \eqref{depart} we observe  that $\Pei$ is a classical solution of the equation
   \begin{equation}
   \begin{aligned}
& -\Delta\Pei= \phi_i(x) \Bigg( (\nabla_x\otimes \nabla_x ): (u^\eta\otimes u^\eta) -\Delta_x \big(\phi(x) (\ue(x)\cdot \vec{n}(x))^2\big)\Bigg) \\
&\quad \quad\quad\quad \quad\quad  \quad\quad -2(\nabla_x \phi_i)\cdot \nabla_x \Pe -(\Delta_x  \phi_i) \Pe \,, \label{Piiii}
 \end{aligned}
 \end{equation}
 where  in both sides are functions with compact support in $\Omega$. Hence the solution $\Pei$ of \eqref{Piiii} is given by the fundamental formula

  \begin{equation*}
  \begin{aligned}
& \Pei =   \frac1{2\pi} \log\frac1{|x|}\ast \Bigg(\phi_i(x) \Bigg( (\nabla_x\otimes \nabla_x ): (u^\eta\otimes u^\eta) -\Delta_x \big(\phi(x) (\ue(x)\cdot \vec{n}(x))^2\big)\Bigg) \\
&\quad \quad\quad\quad \quad\quad   \quad\quad\quad\quad   \quad\quad  -2(\nabla_x \phi_i)\cdot \nabla_x \Pe -(\Delta_x  \phi_i) \Pe\Bigg )\,,\label{k0}
\end{aligned}
 \end{equation*}
from which the  estimate (\ref{Piii}) follows. In fact observe that both sides of (\ref{k0}) are smooth functions and that the theorem 3.4.1 of \cite {Krylov} can be applied.
\end{proof}

Next, we turn to estimating  the near the boundary term $\Pbe$.  Once again we observe  from \eqref{regu2} and \eqref{depart}  that $\Pbe$ satisfies the equation
\begin{equation}
\begin{aligned}
 -\Delta\Pbe= \phi_b(x) \Bigg( (\nabla_x\otimes \nabla_x ): (u^\eta\otimes u^\eta)
 & -\Delta_x \big(\phi(x) (\ue(x)\cdot \vec{n}(x))^2\big)\Bigg) \\
 &\quad\quad\quad \quad\quad   -2(\nabla_x \phi_b)\cdot \nabla_x \Pe -(\Delta_x  \phi_b) \Pe \,,
\end{aligned}
\end{equation}
using the fact that $\phi(x)=1$ at the support of $\phi_b$ we obtain
\begin{equation}
\begin{aligned}
 -\Delta\Pbe= \phi_b(x) \Bigg( (\nabla_x\otimes \nabla_x ): (u^\eta\otimes u^\eta) &-\Delta_x \big( (\ue(x)\cdot \vec{n}(x))^2\big)\Bigg) \\&\quad\quad \quad\quad\quad -2(\nabla_x \phi_b)\cdot \nabla_x \Pe -(\Delta_x  \phi_b) \Pe \,. \label{Piiiib}
\end{aligned}
\end{equation}
Establishing estimates for $\Pbe$  involve a more detailed analysis near the boundary for which we will use the explicit form of
 $\big(\nabla\otimes \nabla\big )  :\big(\ue\otimes \ue\big)$ in terms the global geodesic coordinates in $V_\delta \cap \overline{\Omega}\,$. This is the objective of the next:

 \begin{lemma}
 For $x \in V_\delta\cap \overline{\Omega}$ one has:
\begin{equation}\label{geodesicflux}
\begin{aligned}
&(\nabla_x\otimes\nabla_x) :(\ue\otimes \ue)=\nabla_x\cdot \Big (\nabla_x \cdot(\ue\otimes \ue)\Big)=\\
&\frac1{J } \Big(  \del_s\big( J ( \del_s (\ue\cdot \vec n)^2\big) +   2 \del_s \del_{\theta} \big((\ue\cdot \vec n)(\ue\cdot \vec \tau)\big ) + \del_\theta\big(\frac 1J \del_\theta(\ue\cdot \vec \tau)^2\big)\Big)
 +R_b^\eta\,,
 \end{aligned}
\end{equation}
where $R_b^\eta$ involves all the first order derivative terms  and is given by the formula:
\begin{equation}
R_b^\eta= \frac\gamma J \Big(\del_s\big((\ue\cdot \vec n)^2-(\ue\cdot\vec \tau)^2\big)   \Big)+\frac1J\del_\theta\big({\frac\gamma J}(\ue\cdot\vec n)(\ue\cdot\vec \tau)\big)\,.\label{reste}
\end{equation}
\end{lemma}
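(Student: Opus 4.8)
The statement bundles two claims: the algebraic identity $(\nabla_x\otimes\nabla_x):(\ue\otimes\ue)=\nabla_x\cdot(\nabla_x\cdot(\ue\otimes\ue))$, and the explicit expansion in geodesic coordinates. The first is immediate, since both sides equal $\sum_{i,j}\del_i\del_j(\ue_i\ue_j)$: the inner divergence of the tensor $\ue\otimes\ue$ is the vector with $i$-th component $\sum_j\del_j(\ue_i\ue_j)$, and taking its divergence contracts the remaining index. So the real task is the coordinate formula, and my plan is to apply the scalar divergence formula (\ref{clas1}) twice --- first to the tensor $\ue\otimes\ue$, then to the vector it produces --- while carefully tracking the curvature terms created whenever a derivative lands on the $\theta$-dependent frame $\vec n,\vec\tau$.

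I would first set $a:=\ue\cdot\vec n$ and $b:=\ue\cdot\vec\tau$ on $V_\delta\cap\overline\Omega$, so that $\ue=a\vec n+b\vec\tau$ and $\ue\otimes\ue=a^2\,\vec n\otimes\vec n+ab(\vec n\otimes\vec\tau+\vec\tau\otimes\vec n)+b^2\,\vec\tau\otimes\vec\tau$. Writing $V:=\nabla_x\cdot(\ue\otimes\ue)$, I need its normal and tangential components before I can use (\ref{clas1}) on it. Since the frame is not constant, I would obtain them from the identity $V\cdot\vec c=\nabla_x\cdot\big((\ue\cdot\vec c)\,\ue\big)-(\ue\otimes\ue):\nabla_x\vec c$ for $\vec c\in\{\vec n,\vec\tau\}$, which follows from $(\ue\otimes\ue)\vec c=(\ue\cdot\vec c)\,\ue$ and the Leibniz rule for $\nabla_x\cdot(T\vec c)$. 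The first term is the divergence of a genuine vector field, computed by (\ref{clas1}) after reading off its components (e.g.\ $(\ue\cdot\vec n)\ue$ has normal part $a^2$ and tangential part $ab$), and the correction $(\ue\otimes\ue):\nabla_x\vec c$ is evaluated from the frame derivatives: using $\nabla_x s=\vec n$, $\nabla_x\theta=\vec\tau/J$ from (\ref{gradients}), $\del_s\vec n=\del_s\vec\tau=0$, and (\ref{normal-tangent-derivatives}), one finds $\nabla_x\vec n=\frac\gamma J\,\vec\tau\otimes\vec\tau$ and an analogous expression for $\nabla_x\vec\tau$, so the corrections reduce to multiples of $\frac\gamma J b^2$ and $\frac\gamma J ab$. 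This expresses $V\cdot\vec n$ and $V\cdot\vec\tau$ through $\del_s(Ja^2),\ \del_\theta(ab),\ \del_s(Jab),\ \del_\theta(b^2)$ and these curvature corrections.

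Finally I would apply (\ref{clas1}) once more, $\nabla_x\cdot V=\frac1J\big(\del_s(J\,V\cdot\vec n)+\del_\theta(V\cdot\vec\tau)\big)$, and expand using $J=1+s\gamma$, $\del_s J=\gamma$, $\del_s\gamma=0$; splitting each $\del_s(Jf)=\gamma f+J\del_s f$ isolates the second-order part, which reassembles precisely into $\frac1J\big(\del_s(J\del_s(a^2))+2\del_s\del_\theta(ab)+\del_\theta(\frac1J\del_\theta(b^2))\big)$, while the leftover terms carry only first-order derivatives of $\ue$ and a factor $\gamma$ and are collected into $R_b^\eta$. The main obstacle is exactly this last bookkeeping: the divergence operator meets the frame $\vec n,\vec\tau$ at both stages, so several $\frac\gamma J b^2$- and $\del_\theta(\frac\gamma J ab)$-type contributions appear whose signs (dictated by (\ref{normal-tangent-derivatives})) and whose combination must be handled with care, and keeping the principal part in exact divergence form is what keeps the hidden $\gamma'(\theta)$ terms absorbed so that only the displayed $R_b^\eta$ survives. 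I would verify the outcome against the flat case $\gamma\equiv0,\ J\equiv1$, where it collapses to $\del_s^2(a^2)+2\del_s\del_\theta(ab)+\del_\theta^2(b^2)$ with $R_b^\eta=0$, and against a curved model (e.g.\ a disc) to pin down all signs.
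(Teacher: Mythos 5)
Your proposal is correct and follows essentially the same route as the paper: your identity $V\cdot\vec c=\nabla_x\cdot\big((\ue\cdot\vec c)\,\ue\big)-(\ue\otimes\ue):\nabla_x\vec c$ is exactly the paper's formula (\ref{trivial}), applied as in the paper with $\vec c=\vec n$ and $\vec c=\vec\tau$, with the frame corrections computed from (\ref{normal-tangent-derivatives}) and (\ref{gradients}), followed by a second application of the divergence formula (\ref{clas1}). The only additions (the flat-case sanity check and the explicit form $\nabla_x\vec n=\frac\gamma J\,\vec\tau\otimes\vec\tau$) are harmless refinements of the same argument.
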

 \begin{proof}
First observe that for any $C^1$ vector functions $x\mapsto  v(x)$ and $x\mapsto u(x)$  one has the formula
\begin{equation}\label{trivial}
\big (\nabla_x \cdot (\ue\otimes \ue) \big)\cdot   v= \nabla_x\cdot \big((\ue\cdot v ) \ue\big) -(u\otimes u) :\nabla_x v\,.
\end{equation}
Then use this formula with $ v=\vec n $ and $  v=\vec \tau $, respectively, to obtain:
\begin{equation}\label{important}
\begin{aligned}
 &\begin{pmatrix}  \nabla_x\cdot \big((\ue\cdot \vec n) \ue\big) -( \ue\otimes \ue) :\nabla_x \vec n    \\
                             \nabla_x\cdot \big(( \ue\cdot \vec \tau)  \ue\big)  -( \ue\otimes  \ue) :\nabla_x \vec\tau             \end{pmatrix} &&=\begin{pmatrix}   \frac 1 J \big(\del_s(  J (\ue\cdot \vec n)^2 )+   \del_{\theta} (( \ue\cdot \vec n)( \ue\cdot \vec \tau))\big )
  \\
  \frac 1 J\big(\del_s ( J( \ue\cdot\vec \tau) ( \ue\cdot \vec n)) +\del_\theta(( \ue\cdot \vec \tau )^2)\big)
 \end{pmatrix}\\
 &\quad &&   -\begin{pmatrix}\frac\gamma J( \ue\cdot \vec \tau)^2) \\
\frac\gamma J(( \ue\cdot \vec \tau)( \ue\cdot \vec n))  \end{pmatrix}\,.
\end{aligned}
\end{equation}
For the first term of the right-hand side of (\ref{important}) the divergence formula (\ref{clas1}) has been used, while for the second term \eqref{normal-tangent-derivatives} and the gradient formula (\ref{gradients}) have been used. Then once again one uses the divergence formula (\ref{clas1}) to conclude the proof. \end{proof}

Combining the result of the Lemma above with the expression of the Laplacian in geodesic coordinate  (\ref{clas3}), to compute  $\Delta_x \big( (\ue(x)\cdot \vec{n}(x))^2\big)$ and $\Delta_x  \phi_b$ in the right-hand side of \eqref{Piiiib}, equation \eqref{Piiiib} yields the following basic formula for our purpose:
\begin{equation}\label{sanss2}
\begin{aligned}
 -\Delta \Pbe=\frac{\phi_b(s)}J\Bigg( \del_\theta\big(\frac 1J \del_\theta(\ue\cdot \vec \tau)^2 \big)&+2    \del_s \del_{\theta}  \Big((\ue\cdot \vec n)(\ue\cdot \vec \tau) \Big) + J R_b^\eta -\del_\theta\big(\frac 1J \del_\theta(\ue\cdot \vec n)^2 \big)\Bigg )
\\
&-\Big( (\del_s^2\phi_b) \Pe +2(\del_s\phi_b)(\del_s \Pe) +\frac \gamma J P^\eta \del_s \phi_b\Big)\,.
 \end{aligned}
  \end{equation}

  \begin{remark}
  It is important to underline the fact that by the specific choice of $P^\eta$ there are no terms involving the second order derivative with respect to $s$   of $\ue$ and $P^\eta\,$ in the right-hand side of the formula (\ref{sanss2}).
 \end{remark}

The first consequence of  formula (\ref{sanss2}), and the remark above, is the uniform (with respect to $\eta$ and $\alpha$) continuity of the function $\del_s   \Pbe$  which is the objective of the following ``trace":

\begin{proposition}\label{tracelim}
The function $\del_s \Pe_b$ is given by an equation of the following form:
\begin{equation}\label{trace1}
\del_s \Pbe(s,\cdot) =\Lambda^\eta(s,\cdot) + \int_s^{\delta}   \Xi^\eta (s',\cdot)ds'\,,
\end{equation}
with $\Lambda^\eta$ and $\Xi^\eta$ equal to $0$ for $s \ge \delta$ and satisfy the estimates:
\begin{equation}\label{trace2}
\begin{aligned}
&\|\Lambda^\eta\|_{C^{0,\alpha}([0,\delta]; H^{-1}(\R/(L\Z)) )}\le C_b \|(\ue\otimes\ue)\|_{C^{0,\alpha}(\Omega) } + D_b\|\Pe\|_{L^\infty(\Omega)}\,,\\
&\|\Xi^\eta\|_{C^{0,\alpha}([0,\delta]; H^{-2}(\R/(L\Z)) )}\le C_b \|(\ue\otimes\ue)\|_{C^{0,\alpha} (\Omega)} + D_b\|\Pe\|_{L^\infty(\Omega)}\,,
\end{aligned}
\end{equation}
where $C_b, D_b$ and positive constants which are independent of $\eta$ and $\alpha$.
\end{proposition}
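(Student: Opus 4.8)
The plan is to integrate the basic formula (\ref{sanss2}) in the $s$-variable, starting from $s=\delta$ where everything vanishes, and to organize the resulting antiderivatives into the two pieces $\Lambda^\eta$ and $\Xi^\eta$ according to how many $s$-derivatives they carry. The crucial structural observation, already emphasized in the remark following (\ref{sanss2}), is that the right-hand side of (\ref{sanss2}) contains \emph{no} second-order $s$-derivatives of either $\ue$ or $\Pe$. Rewriting (\ref{clas3}) as $-\Delta \Pbe = -\frac1J\del_s(J\del_s \Pbe) - \frac1J\del_\theta(\frac1J\del_\theta \Pbe)$, I would isolate $\del_s(J\del_s\Pbe)$ on the left, move the $\del_\theta$-Laplacian term to the right, and integrate from $s$ to $\delta$. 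Since $\Pbe\equiv 0$ for $s\ge \delta$, no boundary term survives at $s=\delta$, and dividing by $J(s,\cdot)>0$ yields an expression of exactly the form (\ref{trace1}).

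The assignment into $\Lambda^\eta$ and $\Xi^\eta$ is dictated by regularity in $\theta$. The terms that, after one $s$-integration, are genuine $\theta$-antiderivatives of products of $\ue$-components—namely the contribution of $\del_s\del_\theta\big((\ue\cdot\vec n)(\ue\cdot\vec\tau)\big)$, which integrates in $s$ to $\del_\theta\big((\ue\cdot\vec n)(\ue\cdot\vec\tau)\big)$, losing only one $\theta$-derivative—belong in $\Lambda^\eta$, the pointwise (in $s$) term, and land in $H^{-1}(\R/(L\Z))$. Everything that must still be integrated against $ds'$ over $[s,\delta]$ goes into $\Xi^\eta$: these are the terms carrying $\del_\theta(\frac1J\del_\theta(\cdots)^2)$, i.e. two $\theta$-derivatives of $C^{0,\alpha}$ quadratic quantities, hence valued in $H^{-2}(\R/(L\Z))$, together with the lower-order remainder $R_b^\eta$ and the cut-off terms $(\del_s^2\phi_b)\Pe$, $(\del_s\phi_b)(\del_s\Pe)$, $\frac\gamma J \Pe\,\del_s\phi_b$. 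For the estimates (\ref{trace2}) I would bound each $\ue$-bilinear factor by $\|\ue\otimes\ue\|_{C^{0,\alpha}(\Omega)}$, using that one or two $\del_\theta$'s map $C^{0,\alpha}(\T)$ continuously into $H^{-1}(\T)$ or $H^{-2}(\T)$ respectively, while the cut-off terms, which involve $\Pe$ and a single $\del_s\Pe$ with $C^\infty$ coefficients supported away from $s=0$, are controlled by $\|\Pe\|_{L^\infty(\Omega)}$; this is where the constants $D_b$ enter. The Hölder-in-$s$ continuity, $C^{0,\alpha}([0,\delta];\cdot)$, follows because the integrand factors are $C^{0,\alpha}$ in $(s,\theta)$ jointly, with Hölder constants controlled by $\|\ue\otimes\ue\|_{C^{0,\alpha}(\Omega)}$, and $s$-integration and multiplication by the smooth $1/J$ preserve the $C^{0,\alpha}$-in-$s$ modulus.

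The $\eta$- and $\alpha$-independence of $C_b, D_b$ is immediate once every bound is traced back to the uniform estimate $\|\ue\otimes\ue\|_{C^{0,\alpha}(\Omega)}$ (itself uniform in $\eta$ by Lemma \ref{regularized-velocity}), to $\|\Pe\|_{L^\infty}$, and to purely geometric quantities ($\gamma$, $J$, and derivatives of the fixed cut-offs $\phi,\phi_b$) that depend only on $\Omega$. The main obstacle I anticipate is bookkeeping rather than analytic depth: one must verify that the single integration in $s$ genuinely removes every second-order $s$-derivative so that $\del_s\Pbe$ is expressed \emph{without} any $\del_s^2$, and then correctly sort the mixed term $\del_s\del_\theta\big((\ue\cdot\vec n)(\ue\cdot\vec\tau)\big)$—which is the only term that, after $s$-integration, survives pointwise in $s$ and therefore fixes the $H^{-1}$ (rather than merely $H^{-2}$) regularity of $\Lambda^\eta$. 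Keeping this distinction clean, and confirming that the factor $1/J$ and the cut-off derivatives never reintroduce an $s$-singularity near the boundary, is the delicate point.
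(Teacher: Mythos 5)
Your proposal is correct and follows essentially the same route as the paper: the paper likewise uses the geodesic form (\ref{clas3}) to isolate $-\del_s(J\del_s \Pbe)$ on the left, moves the tangential term $\del_\theta\big(\tfrac1J\del_\theta \Pbe\big)$ to the right-hand side, and integrates once or twice in $s$ and $\theta$, term by term, exactly as in your sorting of the mixed term $\del_s\del_\theta\big((\ue\cdot\vec n)(\ue\cdot\vec\tau)\big)$ versus the two-$\theta$-derivative and cut-off terms. The only cosmetic difference is that the paper phrases the $H^{-1}$/$H^{-2}$ bookkeeping dually, by pairing with a test function $\Phi(\theta)\in H^{2}(\R/(L\Z))$, rather than through the mapping properties of $\del_\theta$ and $\del_\theta^2$ on H\"older functions that you invoke.
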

\begin{proof}
After using the expression of the Laplacian in geodesic coordinate  (\ref{clas3}) to compute the left-hand side of  (\ref{sanss2}), equation (\ref{sanss2}) gives:
\begin{equation*}
\begin{aligned}
 &-\del_s(J \del_s  \Pbe)=\phi_b(s)\Bigg( \del_\theta\big(\frac 1J \del_\theta(\ue\cdot \vec \tau)^2 \big)+2    \del_s \del_{\theta}  \Big((\ue\cdot \vec n)(\ue\cdot \vec \tau) \Big) + J R_b^\eta -\del_\theta\big(\frac 1J \del_\theta(\ue\cdot \vec n)^2 \big)\Bigg )
\\
&- J\Big( (\del_s^2\phi_b) \Pe +2(\del_s\phi_b)(\del_s \Pe) +\frac \gamma J P^\eta \del_s \phi_b\Big)+  \del_\theta \big( \frac1J\del_\theta P^\eta_b\big)\,,
\end{aligned}
\end{equation*}
where we recall that $R_b^\eta$ is given by \eqref{reste}.
Then multiply this equation by a test function $\Phi(\theta) \in H^2  (\R/(L\Z))$ and integrate once or twice, according to the different terms, with respect to $s $ and $\theta$ to obtain (\ref{trace1}) with  estimates (\ref{trace2}).
\end{proof}
\subsection{$C^{0,\alpha}$ regularity estimate for the boundary layer function $\Pbe\,.$}
To obtain $C^{0,\alpha}$ regularity estimates for $\Pbe$ we decompose it into the sum of two functions
\begin{equation}
\Pbe=\Pbe^b + \Pbe^i\,,
\end{equation}
the first one takes care of the boundary term and the second takes care of the right-hand side of equation (\ref{sanss2}) according to the following formulas (observing that both functions, $\Pbe^b$ and $\Pbe^i$, are identically equal  to $0$ whenever $d(x,\del\Omega) \ge \delta-\epsilon$).

\begin{subequations}
\begin{equation}\begin{aligned}\label{pbb}
 \hbox{ In} \quad V_{\delta}\cap \Omega  \quad -\Delta \Pbe^b =0\,,  \quad& \hbox{on} \quad \del\Omega\quad \del_{\vec n}  \Pbe^b = \gamma (\ue \cdot\vec \tau)^2\,\quad \\
 & \hbox{and on}\quad d(x,\del \Omega) = \delta \quad \Pbe^b= 0\,,
\end{aligned}
\end{equation}
\begin{equation}
\begin{aligned}
&\hbox{in} \quad  V_\delta\cap \Omega
 \, - \Delta \Pbe^i=
 \frac{\phi_b(s)}J\Bigg( \del_\theta\big(\frac 1J \del_\theta(\ue\cdot \vec \tau)^2 \big)+2    \del_s \del_{\theta}  \Big((\ue\cdot \vec n)(\ue\cdot \vec \tau) \Big) + J R_b^\eta
 \\& \hskip 3cm -\del_\theta\big(\frac 1J \del_\theta(\ue\cdot \vec n)^2 \big)
 \Bigg)
-\Big( (\del_s^2\phi_b) \Pe +2(\del_s\phi_b)(\del_s \Pe) +\frac \gamma J P^\eta \del_s \phi_b\Big)\,,
 \\
&  \hbox{on }\quad  \del \Omega \quad\del_{\vec n}\Pbe^i =0 \quad \hbox{while  on} \quad d(x,\del\Omega)=\delta \quad \quad \Pbe^i=0\,. \label{bi}
\end{aligned}
\end{equation}
\end{subequations}
First observe that the function $\Pbe^b$ is a harmonic function satisfying the   homogeneous Dirichlet boundary condition on $d(x,\del\Omega)=\delta$ and the  Neumann boundary condition:
\begin{equation*}
\del_{\vec n} \Pbe^b =\gamma (\ue \cdot\vec \tau)^2 \quad \hbox{on} \quad \del\Omega \,.
\end{equation*}

Therefore, as in the proof of Proposition \ref{Pei}  one has, by elliptic  H\"older regularity theory (cf.  chapter 3 of \cite{LU} chapter 6 or more precisely Theorem 3.4.1 and   Theorem 4.5.1 of \cite {Krylov}), the estimate
\begin{equation}
\|\Pbe^b\|_{C^{0,\alpha}}(\Omega) \le C \|(\ue \otimes \ue)\|_{C^{0,\alpha}(\Omega)}\,.
\end{equation}

Denoting by  $(-\Delta_{dn})^{-1}$ the solution operator of the boundary-value problem (\ref{bi}) which is well defined (due in particular to the homogeneous  Dirichlet boundary condition on $d(x,\del\Omega)=\delta$).  The remaining estimate for $\Pbe^i$ is more subtle. A key point in the proof relies on the fact that   right-hand side  of  equation (\ref{bi}) does not involve any  second order derivative terms with respect to  $s$.

Since the problem is considered in the $V_\delta\cap \Omega$, i.e., in the ``slab"  $(s,\theta) \in (0,\delta))\times (\R/(L\Z))$,  one introduces the Green function, $k$,  associated with $(-\Delta_{dn})^{-1}$ according to the formula:
\begin{equation} \label{Green-Function-bi}
((-\Delta_{dn})^{-1} f)(s,\theta) = \int_{(0,\delta)\times (\R/(L\Z))} k(s,\theta ; s', \theta') f(s',\theta')J(s',\theta')ds'd\theta'\,.
\end{equation}
    Applying the representation  \eqref{Green-Function-bi} to equation \eqref{bi}  one obtains $\Pbe^i$ as the sum of 3 terms:

\begin{equation}\label{Sum-of-three-I}
    \Pbe^i=I_1+ I_2+I_3\,.
\end{equation}

    \begin{equation}
    \begin{aligned}
& I_1= \int_{(0,\delta)\times (\R/(L\Z))}  k(s,\theta; s', \theta')   \phi_b(s')\Bigg( \del_{\theta'}\big(\frac 1J \del_{\theta'}(\ue\cdot \vec \tau)^2 \big)-\del_{\theta'}\big(\frac 1J \del_{\theta'}(\ue\cdot \vec n)^2 \big)+ \\
& \hskip 5cm 2 \del_{s'} \del_{\theta'}  \Big((\ue\cdot \vec n)(\ue\cdot \vec \tau) \Big) \Bigg)(s',\theta') ds'd\theta'\,. \label{I1}
\end{aligned}
    \end{equation}
Integrating  twice with respect to $\theta'$  the first two  terms of the right-hand side of (\ref{I1}) ,  one time with respect to $\theta'$ and one time with respect to $s'$ the third (taking in account the fact that $(\ue\cdot \vec n)(0,\theta')=0$) we obtain:
\begin{equation}\label{I1-new}
\begin{aligned}
&I_1=\int_{(0,\delta)\times (\R/(L\Z))}\del_{\theta'}\Big(\frac1{J(s',\theta')}\del_{\theta'}\big(\phi_b(s') k(s,\theta ; s',\theta')\big )\Big)\Big(\ue\cdot \vec \tau)^2 -(\ue\cdot \vec n)^2 \Big)(s',\theta')ds'd\theta'\\
&+2\int_{(0,\delta)\times(\R/(L\Z))}\del_{s'}\Big(\frac1{J(s',\theta')}\del_{\theta'}\big(\phi_b(s') k(s,\theta ; s',\theta')\big)\Big) \big((\ue\cdot \vec n)(\ue\cdot \vec \tau) \big)(s',\theta') ds'd\theta' \,.
\end{aligned}
\end{equation}
For $I_2$  we write:
\begin{equation*}
\begin{aligned}
&I_2=\int_{(0,\delta)\times  (\R/(L\Z))} k(s,\theta ; s',\theta')\phi_b(s')\gamma (\theta') \Big(\del_{s'}\big((\ue\cdot \vec n)^2-(\ue\cdot\vec \tau)^2\big)\Big)(s',\theta')  ds'd\theta' \\
&+\int_{(0,\delta)\times  (\R/(L\Z))} k(s,\theta ; s',\theta')\phi_b(s')\del_{\theta'}\big({\frac\gamma J}(\ue\cdot\vec n)(\ue\cdot\vec \tau)\big)(s',\theta')ds'd\theta'\,.
\end{aligned}
\end{equation*}
After integration by parts one has
$$
I_2 =:I_2^i + I_2^b
$$
with
\begin{equation}\label{I2i}
\begin{aligned}
&I_2^i=  -\int_{(0,\delta)\times  (\R/(L\Z))} \del_{s'}\Big(\gamma(\theta') \phi_b(s') k(s,\theta ; s',\theta') \Big) \Big( (\ue\cdot \vec n)^2(s',\theta')-(\ue\cdot\vec \tau)^2 (s',\theta')\Big )  ds'd\theta'
\\
&-\int_{(0,\delta)\times  (\R/(L\Z))} \del_{\theta'}\big(k(s,\theta ; s',\theta')\phi_b(s')\big) \big({\frac\gamma J}(\ue\cdot\vec n)(\ue\cdot\vec \tau)\big)(s',\theta')ds'd\theta'
\end{aligned}
\end{equation}
and
\begin{equation}\label{I2b}
 I_2^b=  \int_{ (\R/(L\Z))}  \gamma(\theta')   k(s,\theta ; 0,\theta')    (\ue\cdot\vec \tau)^2 (0,\theta') d\theta'\,.
\end{equation}
Eventually we have:
\begin{equation}\label{I3}
I_3 =- (-\Delta_{dn})^{-1}\Big( (\del_s^2\phi_b) \Pe +2(\del_s\phi_b)(\del_s \Pe) +\frac \gamma J P^\eta \del_s \phi_b\Big)\,.
\end{equation}
Using the classical H\"older theory (cf.  as above chapter 3 of \cite{LU} or   Theorem 6.3.2  of \cite{Krylov} ) we prove the following:
\begin{proposition}
The  terms $I_i \,,$ for  $ i=1,2,3$, in \eqref{Sum-of-three-I} satisfy  an estimate of the form
\begin{equation}
\hbox{for}\quad i=1,2,3\,, \quad \|I_i\|_{C^{0,\alpha}(\Omega)} \le      C\|(u^\eta\otimes u^\eta)\|_{C^{0,\alpha}(\Omega)}+D\|P^\eta\|_{L^\infty(\Omega)}\,.\label{bbasic}
 \end{equation}
 \end{proposition}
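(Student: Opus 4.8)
The plan is to handle all three terms $I_1,I_2,I_3$ with a single elliptic engine: the classical H\"older (Schauder) and Calder\'on--Zygmund theory for the slab operator $(-\Delta_{dn})^{-1}$, applied through the Green-function representation \eqref{Green-Function-bi}. Two facts drive everything. First, a \emph{second} derivative of the Green potential of a $C^{0,\alpha}$ datum is again $C^{0,\alpha}$ (this is exactly Theorem 3.4.1 / 6.3.2 of \cite{Krylov}); second, a \emph{single} derivative of the Green potential of a merely bounded datum is log-Lipschitz, hence lies in $C^{0,\beta}$ for every $\beta<1$ and in particular in $C^{0,\alpha}$. Since $\Pbe^i$ vanishes for $d(x,\del\Omega)\ge\delta-\epsilon$ and $X$ is a $C^2$ diffeomorphism, the geodesic and Cartesian $C^{0,\alpha}$ norms are equivalent on the relevant set, so it suffices to estimate in the $(s,\theta)$ variables. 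The decisive structural feature, stressed in the Remark after \eqref{sanss2}, is that \emph{no second-order $s$-derivative} of $\ue$ or of $\Pe$ ever appears; all derivatives on the data are either tangential or are moved onto the kernel by integration by parts. All constants below are geometric (depending on $\delta,\epsilon,\alpha,\Omega$) and, crucially, independent of $\eta$.

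For $I_1$ I would start from the twice-integrated-by-parts form \eqref{I1-new}, in which both $\del_{\theta'}$'s and the mixed $\del_{s'}\del_{\theta'}$ have been transferred onto $\phi_b(s')k$, leaving the $C^{0,\alpha}$ quadratic data $(\ue\cdot\vec\tau)^2-(\ue\cdot\vec n)^2$ and $(\ue\cdot\vec n)(\ue\cdot\vec\tau)$. This displays $I_1$ as a second-derivative-of-Green-potential operator acting on $C^{0,\alpha}$ data, so the first fact gives $\|I_1\|_{C^{0,\alpha}(\Omega)}\le C\|\ue\otimes\ue\|_{C^{0,\alpha}(\Omega)}$ with $D=0$. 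For $I_2=I_2^i+I_2^b$: the interior part \eqref{I2i} carries only one derivative on the kernel and is paired against $\gamma$ (bounded, since $\del\Omega\in C^2$) times quadratic terms in $\ue$, hence against data bounded in $L^\infty$ by $\|\ue\otimes\ue\|_{C^{0,\alpha}(\Omega)}$; the second fact then yields the $C^{0,\alpha}$ bound. The boundary part \eqref{I2b} is the Neumann/single-layer potential with flux $\gamma(\ue\cdot\vec\tau)^2$ at $s'=0$, i.e.\ it has exactly the structure of $\Pbe^b$ from \eqref{pbb}, whose estimate $\|\Pbe^b\|_{C^{0,\alpha}(\Omega)}\le C\|\ue\otimes\ue\|_{C^{0,\alpha}(\Omega)}$ was already established. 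Summing, $\|I_2\|_{C^{0,\alpha}(\Omega)}\le C\|\ue\otimes\ue\|_{C^{0,\alpha}(\Omega)}$, again with $D=0$.

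I expect $I_3$, given by \eqref{I3}, to be the main obstacle. Its datum $(\del_s^2\phi_b)\Pe+2(\del_s\phi_b)(\del_s\Pe)+\tfrac{\gamma}{J}\Pe\,\del_s\phi_b$ is supported on $\{\del_s\phi_b\neq0\}$, a compact set strictly interior to the slab (bounded away from $s=0$ and $s=\delta$). The two terms carrying $\Pe$ undifferentiated are immediately controlled by $\|\Pe\|_{L^\infty(\Omega)}$ after pairing with the (there nonsingular) kernel, producing the $D\|\Pe\|_{L^\infty}$ contribution. The dangerous term is $2(\del_s\phi_b)(\del_s\Pe)$, because $\del_s\Pe$ is a first pressure derivative for which there is \emph{no} a priori $\eta$-uniform bound. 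The remedy is to integrate by parts in $s'$ inside the Green representation: since $\del_s\phi_b$ vanishes identically near both $s'=0$ and $s'=\delta$, all boundary contributions drop, and $\del_{s'}$ passes onto $k\,(\del_{s'}\phi_b)\,J$, leaving $\Pe$ itself paired against $\del_{s'}\big(k\,(\del_{s'}\phi_b)\,J\big)$. As this kernel lives where $s'$ is bounded away from the boundary, its singularity is the ordinary weakly singular one of $\del(-\Delta)^{-1}$, so the operator maps $L^\infty$ into $C^{0,\alpha}(\Omega)$ and $\|I_3\|_{C^{0,\alpha}(\Omega)}\le D\|\Pe\|_{L^\infty(\Omega)}$. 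Combined with the $I_1$ and $I_2$ bounds this gives \eqref{bbasic}.

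Two points demand vigilance. First, $\del_s\Pe$ must never be estimated directly; the integration-by-parts reduction to $\|\Pe\|_{L^\infty}$ is exactly what renders the scheme $\eta$-uniform, and it succeeds only because $\Pe$ was designed so that no $\del_s^2$ term survives in \eqref{sanss2}. Second, $\del\Omega\in C^2$ provides only $\gamma\in C^0$, so one must avoid differentiating $\gamma$ in $\theta'$: where the coordinate form of the tangential second derivative of $k$ in \eqref{I1-new} appears to require $\gamma'$, I would instead invoke the harmonicity of $k$ and \eqref{clas3} to trade $\del_{\theta'}\big(\tfrac1J\del_{\theta'}k\big)$ for $\del_{s'}(J\del_{s'}k)$ plus a local (delta) term; the resulting kernels involve $\del_{s'}J=\gamma$ but never $\gamma'$, keeping everything within the reach of the $C^2$-boundary H\"older theory.
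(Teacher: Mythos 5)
Your proposal is correct, and its engine is the same as the paper's: estimate each term of \eqref{Sum-of-three-I} through the Green representation \eqref{Green-Function-bi}, using the Schauder-type $C^{0,\alpha}\to C^{2,\alpha}$ property when two derivatives fall on the kernel against H\"older data (that is $I_1$, via \eqref{I1-new}), and the weaker fact that zero- or first-order kernel derivatives applied to $L^\infty$ data already give $C^{0,\alpha}$ output (that is $I_2^i$ and $I_3$). You diverge in two sub-arguments, both defensibly. For $I_2^b$ the paper computes the kernel near the boundary explicitly by the method of images, $k(s,\theta;0,\theta')=\frac{1}{4\pi}\log\frac{1}{(\theta-\theta')^2+s^2}$ modulo smooth functions, and reads off the estimate; you instead recognize $I_2^b$ as the Green-identity representation of the harmonic Neumann problem \eqref{pbb}, i.e.\ essentially $\Pbe^b$ itself, so the bound already proved for $\Pbe^b$ applies verbatim --- a cleaner route that avoids re-deriving kernel asymptotics. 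For $I_3$ the paper only remarks that the datum has compact support in $V_\delta\cap\Omega$ and that only low-order kernel derivatives occur; the integration by parts in $s'$ that you carry out (with no boundary terms, since $\del_s\phi_b$ vanishes near $s'=0$ and $s'=\delta$) is precisely the unstated step that converts the uncontrolled $\del_s \Pe$ into $\Pe$ paired against one kernel derivative, and making it explicit is an improvement: without it the $D\|\Pe\|_{L^\infty(\Omega)}$ bound is not obtainable uniformly in $\eta$. Finally, your caution about $\gamma'$ is well placed: the kernel in \eqref{I1-new} contains $\del_{\theta'}(1/J)$, hence formally the derivative of the curvature, which a merely $C^2$ boundary does not provide; the paper is silent on this point, and your device of using harmonicity of $k$ to trade $\del_{\theta'}\big(\tfrac1J\del_{\theta'}k\big)$ for $-\del_{s'}(J\del_{s'}k)$ plus a local term keeps every coefficient at the level of $\gamma$ itself (the local term contributes $\phi_b$ times the H\"older datum, which is harmless), so this refinement strengthens rather than alters the argument.
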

 \begin{proof}
    To estimate $I_1$ we use the expression \eqref{I1-new} and follow similar steps for those showing the continuity of the linear operator $(-\Delta_{dn})^{-1}\,,$  defined by the formula (\ref {Green-Function-bi}), as a map from $C^{0,\alpha}$ to $C^{2,\alpha}$ (cf.  once again chapter 3 of \cite{LU}   or  more precisely theorem 6.3.2   of \cite{Krylov} ). Obviously same estimates hold for $I_2^i $ and $I_3$ which involve only first and $0$ order derivatives of the kernel $k(s,\theta ; s',\theta')\,.$  In particular
 for $I_3$ one observes that, in (\ref{I3}),
 $(-\Delta_{dn})^{-1}$    is applied to a function with compact support in $V_\delta\cap \Omega \,.$ Then the term $I_3$  is obtained by convolution with the fundamental
 kernel
 $$\frac1{2\pi} \log\frac{1}{|x|}\,.$$

 Eventually for the term  $I_2^b$, given  by \eqref{I2b},
 one may also directly observe that for  $s'$  close to $0$ and $s-s'$ small,  $k(s,\theta ; s',\theta')$ is given (modulo smooth function) by the formula
 \begin{equation}
  k(s,\theta ; s',\theta')=\frac 1{16\pi}\big( \log(\frac1{ (\theta-\theta')^2+(s-s')^2}) +\log(\frac1{( \theta-\theta')^2+(s+s')^2})\big)
 \end {equation}
 which gives also modulo smooth functions
 $$ k(s,\theta ; 0,\theta')   =\frac 1{4\pi}  \log(\frac1{ (\theta-\theta')^2+s^2})\,.$$
 Hence $I_2^b$ satisfies also estimate (\ref{bbasic}).

  \end{proof}

\subsection{Letting $\eta \rightarrow 0$ and removing the constant $D$\,.}
Since at any point of $\Omega$ the function $P^\eta$ coincides either with the boundary term $P_b^\eta$ or with the interior term $P_i^\eta$ one can collect the estimates from the previous section to write:
\begin{equation}\label{calphapert}
\|P^\eta\|_{C^{0,\alpha}(\Omega)} \le \|P_b^\eta\|_{C^{0,\alpha}(\Omega)} + \|P_i^\eta\|_{C^{0,\alpha}(\Omega)} \le  C\|(u^\eta\otimes u^\eta)\|_{C^{0,\alpha}(\Omega)}+D\|P^\eta\|_{L^\infty(\Omega)}\,,
\end{equation}
where the positive constants $C$ and $D$ are independent of $\eta$. Eventually, we would like to take the limit as $\eta \to 0$. However, we first    state and prove the following:
\begin{proposition}\label{lebeau}
The regularized function $P^\eta$ constructed above satisfy the relation:
\begin{equation}
\|P^\eta\|_{L^\infty(\Omega)}\le C_1   \|(u^\eta\otimes u^\eta)\|_{C^{0,\alpha}(\Omega)}\,, \label{lebeau2}
\end{equation}
with a positive constant $C_1$ which depends on $\alpha$ and $\Omega$, but  is independent of $\eta\,.$
\end{proposition}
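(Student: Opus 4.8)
The plan is to argue by contradiction, exploiting the fact that the constants $C$ and $D$ in \eqref{calphapert} are independent of $\eta$ in order to run a normalization/compactness argument that removes the term $D\|\Pe\|_{L^\infty(\Omega)}$. Suppose \eqref{lebeau2} were false for every $C_1$. Then there would exist a sequence $\eta_k\to0$ for which, writing $\lambda_k:=\|P^{\eta_k}\|_{L^\infty(\Omega)}$ and $\mu_k:=\|(u^{\eta_k}\otimes u^{\eta_k})\|_{C^{0,\alpha}(\Omega)}$, one has $\lambda_k/\mu_k\to\infty$; in particular $\lambda_k>0$ for $k$ large and $\mu_k/\lambda_k\to0$. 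Normalizing $Q^k:=P^{\eta_k}/\lambda_k$ gives $\|Q^k\|_{L^\infty(\Omega)}=1$, and dividing \eqref{calphapert} by $\lambda_k$ yields $\|Q^k\|_{C^{0,\alpha}(\Omega)}\le C\mu_k/\lambda_k+D$, so that $\{Q^k\}$ is bounded in $C^{0,\alpha}(\overline\Omega)$.

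By the compact embedding $C^{0,\alpha}(\overline\Omega)\hookrightarrow C^0(\overline\Omega)$ I would extract a subsequence (not relabelled) with $Q^k\to Q$ uniformly on $\overline\Omega$; hence $\|Q\|_{L^\infty(\Omega)}=1$. Since $P^{\eta_k}-p^{\eta_k}=\phi\,(u^{\eta_k}\cdot\vec n)^2$ is $O(\mu_k)$ in $L^\infty(\Omega)$, the rescaled pressures $p^{\eta_k}/\lambda_k$ converge to the \emph{same} limit $Q$, and the normalization $\int_\Omega p^{\eta_k}\,dx=0$ from \eqref{regu2} passes to the limit to give $\int_\Omega Q\,dx=0$.

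The crux is to identify the equation satisfied by $Q$. I would test the clean Neumann problem \eqref{regu2} for $p^{\eta_k}$ against an arbitrary $\varphi\in C^\infty(\overline\Omega)$ via Green's identity, and then integrate the forcing $(\nabla\otimes\nabla):(u^{\eta_k}\otimes u^{\eta_k})$ by parts twice, transferring both derivatives onto $\varphi$. Two boundary integrals are produced, and here the tangency $u^{\eta_k}\cdot\vec n=0$ is decisive: the term carrying $(u^{\eta_k}\cdot\vec n)(u^{\eta_k}\cdot\nabla\varphi)$ vanishes identically on $\del\Omega$, while the remaining boundary term is exactly $\sum_{i,j}\del_j(u^{\eta_k}_iu^{\eta_k}_j)\vec n_i$ integrated against $\varphi$, which by the geometric identity \eqref{wb} equals $\int_{\del\Omega}\gamma\,(u^{\eta_k}\cdot\vec\tau)^2\,\varphi\,dS$. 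Consequently the entire right-hand side, namely the transformed volume term $\int_\Omega(u^{\eta_k}\otimes u^{\eta_k}):\nabla^2\varphi$, the boundary term just described, and the prescribed Neumann datum $\gamma(u^{\eta_k}\cdot\vec\tau)^2$, is bounded by $C\mu_k\,\|\varphi\|_{C^2(\overline\Omega)}$. Dividing the resulting identity by $\lambda_k$ and letting $k\to\infty$, the forcing and boundary terms disappear because $\mu_k/\lambda_k\to0$, while the left-hand side converges by the uniform convergence of $p^{\eta_k}/\lambda_k$, leaving
\[
\int_\Omega Q\,\Delta\varphi\,dx=-\int_{\del\Omega}Q\,\del_{\vec n}\varphi\,dS\qquad\text{for all }\varphi\in C^\infty(\overline\Omega),
\]
which is precisely the weak form of the homogeneous Neumann problem $-\Delta Q=0$ in $\Omega$, $\del_{\vec n}Q=0$ on $\del\Omega$.

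Taking $\varphi\in C_c^\infty(\Omega)$ shows $Q$ is harmonic, hence smooth in $\Omega$; together with the weak Neumann condition and $Q\in C^{0,\alpha}(\overline\Omega)$, the uniqueness statement of Proposition \ref{unicity} forces $Q$ to be constant, and then $\int_\Omega Q\,dx=0$ gives $Q\equiv0$. This contradicts $\|Q\|_{L^\infty(\Omega)}=1$ and proves \eqref{lebeau2}. I expect the main obstacle to be the step in the previous paragraph: a naive integration by parts of the forcing leaves boundary traces of first derivatives of $u^{\eta_k}$, which are \emph{not} controlled by $\mu_k$ and would blow up as $\eta_k\to0$; the whole argument hinges on recognizing that the tangency condition, through \eqref{wb}, is exactly what recasts these dangerous terms as quantities bounded by $\|u^{\eta_k}\otimes u^{\eta_k}\|_{C^{0,\alpha}(\Omega)}$, after which dividing by the diverging ratio $\lambda_k/\mu_k$ annihilates them.
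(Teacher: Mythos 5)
Your proof is correct, and while its skeleton (contradiction, normalization by $\|\Pe\|_{L^\infty(\Omega)}$, the uniform bound \eqref{calphapert}, Arzel\`a--Ascoli, and uniqueness for the homogeneous Neumann problem) coincides with the paper's, the decisive step -- passing the Neumann boundary condition to the limit -- is handled by a genuinely different mechanism. The paper stays with the modified pressure $\Pe$ in geodesic coordinates and invokes its trace result (Proposition \ref{tracelim}, estimates \eqref{trace2}) to conclude that the normalized limit $G$ satisfies $\del_s G(0,\cdot)=0$ in $H^{-2}(\R/(L\Z))$, whereas you work with the unmodified pressure $p^{\eta_k}$, test \eqref{regu2} against $\varphi\in C^\infty(\overline\Omega)$ via Green's identity, integrate the forcing by parts twice, and use tangency to kill the term $(u^{\eta_k}\cdot\vec n)(u^{\eta_k}\cdot\nabla\varphi)$ while the Weingarten identity \eqref{wb} converts the remaining boundary trace of $\nabla\cdot(u^{\eta_k}\otimes u^{\eta_k})$ into the zeroth-order quantity $\gamma(u^{\eta_k}\cdot\vec\tau)^2$; every boundary contribution is then $O\big(\|u^{\eta_k}\otimes u^{\eta_k}\|_{C^{0,\alpha}(\Omega)}\big)\|\varphi\|_{C^2(\overline\Omega)}$ and dies after division by the diverging denominator. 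Your closing paragraph correctly identifies this as the crux (a naive integration by parts would leave uncontrolled derivatives of $u^{\eta_k}$ on $\del\Omega$). What your route buys: it is more elementary and self-contained for this proposition -- no Green's function, no $H^{-2}$-valued trace theorem -- and the observation that $p^{\eta_k}/\lambda_k$ and $Q^k$ share the same limit is a clean simplification. What it buys less: the limit $Q$ satisfies the boundary condition only in the very weak (transposition) sense, so your argument could not substitute for the paper's trace apparatus, which is reused after Proposition \ref{lebeau} to identify the Neumann datum of the limit $P$ and is precisely what gives statement 2 of Theorem \ref{Super}. Two small points, both at the same level of rigor as the paper's own text: citing Proposition \ref{unicity} is slightly off-label, since that statement concerns extendable distributional solutions of the inhomogeneous problem, but the fact you need -- $Q$ harmonic, $Q\in C^{0,\alpha}(\Omega)$, $\int_\Omega Q\,\Delta\varphi\,dx=-\int_{\del\Omega}Q\,\del_{\vec n}\varphi\,dS$ for all $\varphi$, and $\int_\Omega Q\,dx=0$ imply $Q\equiv 0$ -- follows by extending your identity to $\varphi\in C^2(\overline\Omega)$ by density and testing with the Schauder solution of $\Delta\varphi=Q-|\Omega|^{-1}\int_\Omega Q\,dx$, $\del_{\vec n}\varphi=0$; and your extracted sequence should be arranged to satisfy $\eta_k\to 0$ (if instead $\eta_k\to\eta_*>0$ the ratio stays finite by continuity, so no contradiction is lost), exactly as is implicit in the paper.
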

\begin{proof}
The proof is done by contradiction.
 Assume that the Proposition  \ref{lebeau} is false. As a result  one can extract a subsequence,   still denoted $P^\eta$, such that:
  \begin{equation}
 \lim_{\eta\rightarrow 0} \frac{ \|(u^\eta\otimes u^\eta)\|_{C^{0,\alpha}(\Omega)}}{\quad \|P ^\eta\|_{L^\infty (\Omega)}} =0\,. \label{zerolim}
 \end{equation}
  Therefore, by \eqref{depart}  the sequence:
  \begin{equation}
 G^\eta = \frac{ P^\eta}{ \quad \|P ^\eta\|_{L^\infty (\Omega)}}
 \end{equation}
solves of the boundary-value problem:
\begin{subequations}
 \begin{equation}
 \hbox{ in } \quad \Omega \quad -\Delta G^\eta = \frac{1}{ \|P ^\eta\|_{L^\infty (\Omega)}}\Big( (\nabla\otimes \nabla) : (\ue\otimes \ue )-\Delta (\phi(x)(\ue\cdot\vec n)^2)\Big)\,, \label{interiorweak}
 \end{equation}
 \begin{equation}
 \hbox{ on } \del  \Omega \quad \del_{\vec n}  G^\eta = \frac{1}{\! \|P ^\eta\|_{L^\infty(\Omega)}}\gamma(\ue\cdot \vec n)^2 \,,
 \label{boundaryweak}
 \end{equation}
 \begin{equation}
 \hbox{and} \quad   \int_\Omega G^\eta (x)dx = \frac{1}{\! \|P ^\eta\|_{L^\infty(\Omega)}}\int_\Omega \phi(x) \big(u^\eta(x)\cdot \vec n(x)\big)^2dx\,.\label{averageweak}
 \end{equation}
 \end{subequations}
 Moreover, as are result of  \eqref{calphapert} and \eqref{zerolim} the sequence $\|G^\eta\|_{C^{0,\alpha}(\Omega)}$ is bounded. Thus, by Arzel\`a-Ascoli theorem $G^\eta$ has subsequence, also denoted by $G^\eta$, which converges strongly to a function $G$ in the $C^{0}(\overline{\Omega})$ norm (in fact it converges in the $C^{0,\beta}(\Omega)$ norm for any $\beta \in (0,\alpha)\,$).
 Obviously $\|G^\eta\|_{L^\infty(\Omega)}=\|G\|_{L^\infty(\Omega)}=1$. Therefore, by \eqref{zerolim}  the right-hand side terms of (\ref{interiorweak}),  \eqref{boundaryweak} and \eqref{averageweak} go to $0$  in the $C^{0,\beta}(\Omega)$ norm, for any $\beta \in (0,\alpha)\,,$ as  $\eta\rightarrow 0$.

Recalling from Proposition \ref{tracelim} the formula
 \begin{equation}\label{trace3}
\del_s \Pe(s,\cdot)=\del_s \Pbe(s,\cdot) =\Lambda^\eta(s,\cdot) + \int_s^{\delta}   \Xi^\eta (s',\cdot)ds'\,, \quad \hbox { for} \quad s\in [0,\delta_3+\epsilon]\,.
\end{equation}
Therefore, from the above and  estimates (\ref{trace2})
we deduce
\begin{equation}
G(s,\theta)-G(0,\theta) = \lim_{\eta\rightarrow 0} \big(G^\eta(s,\theta)-G^\eta(0,\theta)\big) =0\quad \hbox{in} \quad C([0,\delta_3+\epsilon];H^{-2}(\R/(L\Z)))\,,
 \end{equation}
which in particular implies that $\del_s G(0,\cdot)=0$ in $H^{-2}(\R/(L\Z))$. As a result of all the above we conclude that $G$ satisfies the boundary-value problem:
 \begin{equation}\label{G-PDE}
\hbox{ in } \quad \Omega  \quad -\Delta  {G}=0\,,\quad \hbox{ on } \quad \del \Omega \quad \del_{\vec n}  {G}=0\, \quad \hbox{ and  } \quad \int_\Omega {G}(x) dx =0\,.
 \end{equation}
But, ${G}=0\,$ the only solution to \eqref{G-PDE}, which contradicts the fact that $\|G\|_{L^\infty(\Omega)}=1$. This in turn completes the proof.
\end{proof}
 Eventually one observes that by virtue of Lemma \ref{regularized-velocity}  the tensor $(\ue\otimes\ue )$ converges  in the  $C^{0}(\overline{\Omega})$ norm to $(u\otimes u )\,$ (in fact the converges in the $C^{0,\beta}(\Omega)$ norm for any $\beta \in (0,\alpha)\,$), with the uniform estimate $\|\ue\otimes\ue\|_{C^{0,\alpha}(\Omega)} \le C  \|u \otimes u\|_{C^{0,\alpha}(\Omega)}\,,$ where $C$ is independent of $\eta$. By means of  estimates \eqref{calphapert} and (\ref{lebeau2}) one concludes that $\|P^\eta\|_{C^{0,\alpha}(\Omega)}$ is bounded,  hence one can extract a subsequence, also denoted $P^\eta$, which  converges to $P\in C^{0,\alpha}(\Omega)$ in the $C^{0}(\overline{\Omega})$  norm. Moreover, arguing exactly as in the proof of
 Proposition \ref{lebeau} one can show from the above and  equation \eqref{trace3}
\begin{equation}
P(s,\theta)-P(0,\theta) = \lim_{\eta\rightarrow 0} \big(P^\eta(s,\theta)-P^\eta(0,\theta)\big) =  \int_0^s\lim_{\eta \to 0}\Big(\Lambda^\eta(\sigma,\cdot) + \int_{\sigma}^{\delta}   \Xi^\eta (s',\cdot)ds'\Big) d\sigma\,,
 \end{equation}
where the above equality holds in  $C^1([0,\delta_3+\epsilon];H^{-2}(\R/(L\Z))).$ This in particular implies that  $\del_s P(0,\theta) = \gamma(\theta) \big (u(0,\theta)\cdot \vec{\tau} (\theta)\big)^2$ in  $H^{-2}(\R/(L\Z))$. Therefore, from all the above we conclude that
  $P\in C^{0,\alpha}(\Omega)\,,$ is the solution of the boundary-value problem stated in Theorem \ref{Super}, hence the proof of Theorem  \ref{Super} and of  Corollary \ref{super} are completed.

\section{Conclusion and additional remarks}
The above derivation is not a  surprising result, taking into account the present mathematical understanding of fluid mechanics, but it requires a series of technical steps, some of which are inspired by treatment of the problem  in the half space as done in \cite{RRS} and \cite{RRS2}. However   here we  are concerned with a bounded domain with genuinely curved boundary. To address this issue, and for the sake of clarity, we  did focus on the  two-dimensional case  and provide  the most explicit detailed computations. This derivations are based on a global analysis near the boundary and including the interaction between two layers which is also inspired by the recent contribution of Kukavica, Vicol and Wang \cite{KVW}.

We believe that such approach may contribute to extending some of the half space classical results, like the Caflish and Sammartino \cite{CS} stability results for Prandtl equations
in the half space, to more general domains.

From the time of Kolmogorov one knows, as described, for instance, in the book of Frisch  \cite{UF}, that anomalous energy dissipation is genuinely related to the  appearance of turbulence. As it is well known this observation is the origin of a long story in the ``Mathematical Physics" community starting with Onsager  \cite{ON} in 1949, continued in the ``Mathematical community" first by
\cite{CET} and \cite{GEY} with many other contributions later. At present, with results based on the theory of convex integration, as initiated by C. De Lellis and L. Sz\'ekelyhidi Jr.,  one knows  (cf. \cite{BSVVV}, \cite{Isett} and references therein) that $1/3$ is the  critical exponent of the H\"{o}lder  regularity for the absence of anomalous energy dissipation. In particular for any $\alpha<\frac13$ there exist, what are termed as, ``wild but admissible solutions" that do not conserve the energy in the Euler equations (cf. \cite{BSVVV} and \cite{Isett} for the most updated results and references).

 According to physical observations the situation is much more complex in the presence of boundaries and boundary effects. Hence considering sufficient conditions for absence or anomalous dissipation of energy or loss of regularity (these aspects being closely related as shown in the basic article of Kato \cite{Ka}) became recently a subject of attention (cf. in particular \cite{BT2}, \cite{BTW}, \cite{DN},
\cite{ RRS} and \cite{RRS2}.)
As such we argue that the present article may bring some (most probably minor) contributions to the theory of turbulence.

 This is in particular in full agreement with the fantastic vision of Uriel in turbulence. In his book \cite{UF},  Uriel  recognizes the importance of boundary effect in fluid mechanics, very well illustrated with figures (1.4) and (1.11) in  \cite{UF}. In particular,  figure  (1.11)  deals with homogenous turbulence, but such turbulent flow is generated by the boundary effects of the grid. Hence Uriel has also contributed, and subscribed, to the idea that turbulence, anomalous energy dissipation and boundary effects are really closely related.
Therefore we are  honored and very   happy with the opportunity to contribute to a special volume devoted to Uriel with this article as a token of recognition for his friendship and generous contribution to the scientific community. We hope  that the result presented may find its place in this volume devoted to turbulence.

\section{Acknowledgements} We would like to add our warmest thanks to the two anonymous reviewers for their
interest careful, reading and positive comments for this contribution.

\end{document}